\newcommand{\R}{\mathbb R}
\newcommand{\N}{\mathbb N}
\newcommand{\cst}{\mbox{\textnormal{Cst}}}
\newcommand{\dsp}{\displaystyle}
\newtheorem{proposition}{Proposition}
\newtheorem{theorem}{Theorem}
\newtheorem{definition}{Definition}
\newtheorem{remark}{Remark}
\newtheorem{lemma}{Lemma}
\newtheorem{example}{Example}
\title[Variable depth KDV equations ]{Variable depth KDV equations and generalizations to more nonlinear regimes}
\author{Samer Israwi}
\address{Universit\'e Bordeaux I; IMB, 351 Cours de la Lib\'eration, 33405 Talence Cedex, France}
\email{Samer.Israwi@math.u-bordeaux1.fr}
\begin{document}

\maketitle

\begin{abstract}\label{abstract}
We study here  the water-waves problem for uneven bottoms in a highly nonlinear regime where
the small amplitude assumption of the Korteweg-de Vries (KdV) equation is enforced. It is known,
that for such regimes, a generalization of the KdV equation (somehow linked to 
the Camassa-Holm equation) can be derived and justified \cite{AD:08} when the bottom is
flat. We generalize here this result
with a new class of equations taking into account variable bottom topographies. Of course, the
many variable depth KdV equations existing in the literature are recovered as particular cases. 
  Various regimes for the topography regimes are investigated and we prove consistency 
of these models, as well as a full justification 
 for some of them. We also study the problem of wave breaking for our new 
 variable depth and highly nonlinear generalizations of the KDV equations.
\end{abstract}

\section{Introduction}\label{int}
\subsection{General Setting}\label{GS}
This paper deals with the water-waves problem for uneven bottoms, which consists in studying the motion of the 
free surface and the evolution of the velocity field of a layer of fluid under the following assumptions:
the fluid is ideal, incompressible, irrotationnal, and under the only influence of gravity.
 Earlier works have set a good theoretical background for this problem. Its well-posedness has been discussed among others by Nalimov \cite{nalimov}, Yasihara \cite{yoshihara}, Craig \cite{craig}, Wu \cite{wu1}, \cite{wu2} and Lannes \cite{lannes}. 
Nevertheless, the solutions of these equations are very difficult to describe, because of the 
complexity of these equations. At this point, a classical method is to choose an asymptotic 
regime, in which we look for approximate models and hence for approximate solutions. 
More recently Alvarez-Samaniego and Lannes \cite{AL} rigorously justified the relevance of
 the main asymptotical models used in coastal oceanography, including: shallow-water equations, 
Boussinesq systems, Kadomtsev-Petviashvili (KP) approximation, Green-Naghdi equations (GN),
 Serre approximation, full-dispersion model and deep-water equations. Some of these models
 capture the existence of solitary water-waves and 
the associated phenomenon of soliton manifestation \cite{Jo-b}. The most prominent example 
is the Korteweg-de Vries  (KdV) equation \cite{KdV}, the only member of the wider family of 
BBM-type equations \cite{BBM} that is integrable and relevant for the phenomenon of soliton manifestation.
The KDV approximation originally derived over flat bottoms has been rigorously justified in \cite{craig,sw,BCL,ig}. When the 
bottom is not flat, various generalizations of the KDV equations with non constant coefficients
have been proposed \cite{kirby,sevendsen,Dingemans, Miles, GP2,yoonliu, GP1, GS, Joh2, SGP}. One of the aims of this article 
is to justify the derivation of this Korteweg-de Vries equation with topography (called KDV-top). Another development of models
 for water-waves was initiated in order to gain insight into wave 
breaking, one of the most fundamental aspects of water-waves \cite{DJ}. 
In 2008 Constantin and Lannes \cite{AD:08} 
rigorously justified the relevance of more nonlinear generalization of the 
KDV equations (linked to the Camassa-Holm equation \cite{CH} and the Degasperis-Procesi equations \cite{DP}) 
 as models for the propogation of shallow water-waves. They proved that these equations can be used to furnish approximations 
 to the governing equations for water-waves, and in their investigation they put earlier (formal) asymptotic procedures due to Johnson \cite{Jo} on a 
firm and mathematically rigorous basis. However, all these results hold for flat bottoms only. 
The main goal of this article is to investigate the same scaling as in \cite{AD:08} and to include topographical 
effects. To this end, we derive a new variable coefficients class of equations which takes into account these effects and generalizes the CH like equations of Constantin-Lannes \cite{AD:08}. 
The presence of the topography terms induce secular growth effects which do not  always allow a full justification of the model. We however  give
some consistency results for all the models derived here, and then
show that under some additional assumptions on the topography variations, the secular terms can be controled 
and a full justification given.
\subsection{Presentation of the results}\label{results}
Parameterizing the  free surface by $z=\zeta(t,x)$  (with $x\in \R$)
and the bottom by $z=-h_0+b(x)$ (with $h_0 > 0$ constant), one can use the incompressibility and
irrotationality  conditions to write  the water-waves  equations under
Bernoulli's formulation,  in terms of a  velocity potential $\varphi$
associated to the flow, and where $\varphi(t, .)$ is defined on $\Omega_t
= \{(x,z), -h_0  +b(x) < z < \zeta(t,x)\}$ (i.e. the velocity
field is given by $v=\nabla_{x,z}\varphi$) :
\begin{equation}
\left\{
\begin{array}{lcl}
\dsp\partial_x^2 \varphi + \partial_z^2 \varphi = 0, & \hbox{in} & \Omega_t ,\vspace{1mm}\\
\dsp\partial_n \varphi = 0, & \hbox{at} & z = -h_0 + b, \vspace{1mm}\\
\dsp\partial_t \zeta + \partial_x \zeta\partial_x \varphi = \partial_z \varphi, & \hbox{at} & z= \zeta, \vspace{1mm}\\
\dsp \partial_t \varphi + \frac{1}{2}((\partial_x \varphi)^2 + (\partial_z \varphi)^2) + g\zeta= 0 & \hbox{at} & z= \zeta,
\end{array}
\right.
\label{ww}
\end{equation}
where $g$  is the gravitational acceleration,  $\partial_n \varphi$ is
the outward normal  derivative at the boundary of  the fluid domain. 
The qualitative study  of the water-waves equations is  made easier by
the  introduction  of   dimensionless  variables  and  unknowns.  This
requires the introduction of various orders of magnitude linked to the
physical regime under consideration.  More precisely, let us introduce
the following quantities: $a$ is the order of amplitude of the waves;
$\lambda$ is the wave-length of the waves; $b_0$
is the order of amplitude of the variations of the bottom topography; 
$\lambda/\alpha$ is the wave-length of the bottom variations; 
$h_0$ is the reference  depth. We also introduce the following dimensionless
parameters:
\begin{equation*}
\varepsilon=\frac{a}{h_0},\quad \mu=\frac{h_0^2}{\lambda^2}, \quad 
\beta=\frac{b_0}{h_0};
\end{equation*}
the  parameter $\varepsilon$ is  often called  nonlinearity parameter;
while  $\mu$   is  the  shallowness  parameter.   We now perform the classical shallow water 
non-dimensionalization using the following relations:
\begin{equation}
\label{dim}
\begin{array}{lll}
x=\lambda x',  & z=h_0z', & \zeta=a\zeta', \\ [0.3cm]
\varphi=\displaystyle\frac{a}{h_0}\lambda\sqrt{gh_0}\varphi',& b=b_0b', & t=\frac{\lambda}{\sqrt{gh_0}}t';
\end{array}
\end{equation}
so, the equations  of motion (\ref{ww}) then become (after dropping  the
primes for the sake of clarity):
\smallskip
\begin{equation}
\left\{
\begin{array}{lcl}
\dsp\mu\partial_x^2 \varphi + \partial_z^2 \varphi = 0, & \hbox{at} & -1+\beta b^{(\alpha)}< z<\varepsilon \zeta ,\vspace{1mm}\\
\dsp\partial_z\varphi-\mu\beta\alpha\partial_xb^{(\alpha)}\partial_x\varphi=0& \hbox{at} & z = -1 + \beta b^{(\alpha)},\vspace{1mm}\\
\dsp\partial_t \zeta -\frac{1}{\mu}(\mu\varepsilon\partial_x \zeta\partial_x \varphi + \partial_z \varphi)=0, & \hbox{at} & z=\varepsilon\zeta,\vspace{1mm}\\
\dsp\partial_t \varphi + \frac{1}{2}(\varepsilon(\partial_x \varphi)^2 + \frac{\varepsilon}{\mu}(\partial_z \varphi)^2) + \zeta= 0 & \hbox{at} & z=\varepsilon \zeta,
\end{array}
\right.
\label{wwadim}
\end{equation}
where $b^{(\alpha)}(x)=b(\alpha x)$. 
Making assumptions on the respective  size of $\varepsilon$, $\beta$, $\alpha$, and
$\mu$ one is led to derive (simpler) asymptotic models from (\ref{wwadim}). In
the  shallow-water  scaling  $(\mu\ll1)$,  one  can  derive  (when no smallness
assumption is made on  $\varepsilon$, $\beta$ and $\alpha$) the so-called Green-Naghdi equations (see \cite{GN,LB} for a derivation and \cite{AL} for a
rigorous justification). For one dimensional surfaces and over uneven
bottoms these  equations couple the free surface  elevation $\zeta$ to
the vertically averaged horizontal component of the velocity,
\begin{equation}
u(t,x)=\frac{1}{1+\varepsilon\zeta-\beta b^{(\alpha)}}\int_{-1+\beta b^{(\alpha)}}^{\varepsilon\zeta}\partial_x\varphi(t,x,z)dz
\label{averaged}
\end{equation}
and can be written as:
\begin{equation}
\left\{
\begin{array}{lc}
\dsp\partial_t\zeta+\partial_x(hu)=0,\vspace{1mm}\\
\dsp(1+\frac{\mu}{h}\mathcal{T}[h,\beta b^{(\alpha)}])\partial_t u+\partial_x\zeta+\varepsilon u\partial_xu\vspace{1mm}\\
\dsp\indent+\mu\varepsilon\big\lbrace-\frac{1}{3h}\partial_x(h^3(u\partial_x^2u)-(\partial_xu)^2)+ \Im[h,\beta b^{(\alpha)}]u\big\rbrace=0
\end{array}
\right.
\label{GN1}
\end{equation}
where $h=1+\varepsilon\zeta-\beta b^{(\alpha)}$ and
\begin{eqnarray*}
\mathcal{T}[h,\beta b^{(\alpha)}]W
=-\frac{1}{3}\partial_x(h^3\partial_xW)+\frac{\beta}{2}\partial_x(h^2\partial_x b^{(\alpha)} )W+\beta^2h(\partial_xb^{(\alpha)})^2W,
\end{eqnarray*}
while the purely topographical term $\Im[h,\beta b^{(\alpha)}]u$ is defined as:
\begin{eqnarray*}
\Im[h,\beta b^{(\alpha)}]u&=&\frac{\beta}{2h}[\partial_x(h^2(u\partial_x)^2b^{(\alpha)})-h^2((u\partial_x^2u)-(\partial_xu)^2)\partial_xb^{(\alpha)})]\\&&+\beta^2((u\partial_x)^2b^{(\alpha)})\partial_xb^{(\alpha)}.
\end{eqnarray*}
If we make the additional assumption that $\varepsilon\ll1$, $\beta\ll1$ then the above system reduces at first order
to a wave equation of speed $\pm 1$ and any perturbation of the surface splits up into two components moving in opposite directions.
 A natural issue is therefore to describe more accurately the motion of these two "unidirectional" waves. In the so called long-wave regime
\begin{equation}
\mu\ll1,\quad \varepsilon=O(\mu),
\label{scalingkdv}
\end{equation}
and for flat bottoms, Korteweg and de Vries \cite{KdV} found that say, the right-going wave should satisfy the KDV equation:
\begin{equation}
u_t+u_x+\frac{3}{2}\varepsilon uu_x+\frac{\mu}{6}u_{xxx}=0,
\label{kdv}
\end{equation}
and ($\zeta=u+O(\varepsilon,\mu)$).\\
At leading order, this equation reduces to the expected transport equation at speed 1. It has been
noticed by Benjamin, Bona, Mahoney \cite{BBM} that the KDV equation belongs to a wider class of equations. For instance, the BBM equation first used by Peregrine \cite{Pe}, and sometimes also called the regularized long-wave equation, provides an approximation of the exact water-waves equations of the same accuracy as the KDV equation and can be written under the form:
 \begin{equation}
u_t+u_x+\frac{3}{2}\varepsilon uu_x+\mu(Au_{xxx}+Bu_{xxt})=0\quad\mbox{ with }\quad  A-B=\frac{1}{6}.
\label{bbm}
\end{equation}
For higher values of $\varepsilon$, the nonlinear effects are stronger; in the regime
   \begin{equation}
\mu\ll1,\quad \varepsilon=O(\sqrt{\mu}),
\label{scalingch}
\end{equation}
the BBM equations (\ref{bbm}) should be replaced by the following family (see \cite{AD:08,Jo}):
\begin{equation}
u_t+u_x+\frac{3}{2}\varepsilon uu_x+\mu(Au_{xxx}+Bu_{xxt})=\varepsilon\mu(Euu_{xxx}+Fu_xu_{xx})
\label{ch}
\end{equation}
(with some conditions on $A$, $B$, $E$, and $F$) in order to keep the same $O(\mu^2)$ accuracy of the approximation. However, all these results only hold for flat bottoms; for the situation of an uneven
bottom, various generalizations of the KDV equations with non constant coefficients
have been proposed \cite{kirby,sevendsen,Dingemans, Miles, GP2,yoonliu, GP1, GS, Joh2, SGP}. We justify in this paper the derivation of the generalized KDV equation and also
 we show that the correct generalization of the equation (\ref{ch}) under the scaling (\ref{scalingch}) and with the following conditions on the topographical variations:
 \begin{equation}
  \beta\alpha=O(\mu),\quad \beta\alpha^2=O(\mu^2) \quad \beta\alpha\varepsilon=O(\mu^{2}),
\label{scalingsd}
\end{equation}
is given by:
\begin{eqnarray}
&u_t+cu_x+\frac{3}{2}c_xu+\frac{3}{2}\varepsilon uu_x+\mu(\tilde{A}u_{xxx}+Bu_{xxt})\nonumber\\&
\quad=\varepsilon\mu\tilde{E}uu_{xxx}
+\varepsilon\mu\Big(\partial_x(\frac{\tilde{F}}{2}u)u_{xx}
+u_x\partial^2_x(\frac{\tilde{F}}{2}u)\Big)
\label{chgeneral}
\end{eqnarray}
where $c=\sqrt{1-\beta b^{(\alpha)}}$ and 
$\tilde{A}$, $\tilde{E}$, $\tilde{F}$ differ from the coefficients $A$, $E$, $F$ in (\ref{ch}) because of
topographic effects:
\begin{eqnarray*}
\tilde{A}&=&Ac^5-Bc^5+Bc \\
\tilde{E}&=&Ec^4-\frac{3}{2}Bc^4+\frac{3}{2}B
 \\\tilde{F}&=&Fc^4-\frac{9}{2}Bc^4+\frac{9}{2}B.
\end{eqnarray*}
Notice that for an equation of the family  (\ref{chgeneral}) to be linearly well-posed it is necessary that
$B\leq0$.
In Sec. \ref{sectdeux}, we derive  asymptotical approximations of  the Green-Naghdi equations over non flat bottoms:
equations on the velocity  are given in Sect. \ref{subsectdeuxun} and equations on  the surface elevation are obtained in Sect. \ref{subsectdeuxdeux}; for these equations, $L^\infty$-consistency results are given (see Definition \ref{defi}). In Sect. \ref{subsectdeuxtrois}, the same kind of result is given in the (more restrictive) KdV scaling in order to recover the many variable depth KdV equations formally derived by oceanographers.  
Section \ref{sectrois} is devoted to the study of  the well posedness  of the equations derived in Section \ref{sectdeux}. Two different approaches are used, depending on the coefficient $B$ in (\ref{chgeneral}): 
\S \ref{subsectroisun} deals with the case $B<0$ (in that case, further investigation on the breaking of waves can be performed, see \S  \ref{subsectroisdeux}) and \S  \ref{subsectroistrois} treats the case $B=0$. 
While secular growth effects prevent is from proving $H^s$-consistency (see Definition \ref{defiH^s})
for the models derived in Section  \ref{sectdeux}, we show in Section  \ref{sectquatre} that such results hold if one makes stronger assumptions on the parameters. A full justification of the models can then be given (see Th. \ref{th4}).

\section{Unidirectional limit of the Green-Naghdi equations over uneven bottom in the CH and KDV scalings}
\label{sectdeux}
We derive here asymptotical approximations of  the Green-Naghdi equations over non flat bottoms in the scalings (\ref{scalingsd}) and (\ref{scalingch}).
 We remark that the Green-Naghdi equations can then be simplified into\\ (denoting $h=1+\varepsilon\zeta-\beta b^{\alpha}$):
 \begin{equation}
 \left\{
 \begin{array}{lc}
 \dsp \zeta_t+[hu]_x=0\vspace{1mm}\\
\dsp  u_t+\zeta_x+\varepsilon uu_x=\frac{\mu}{3h}
 [h^3(u_{xt}+\varepsilon uu_{xx}-\varepsilon u_x^2)]_x,
 \end{array}
 \right.
 \label{GN3}
 \end{equation}
where $O(\mu ^2)$ terms have been discarded.\\

We consider here parameters $\varepsilon$, $\beta$, $\alpha$ and $\mu$ linked by the relations  
\begin{eqnarray}
\label{cond1}
&\varepsilon=O(\sqrt{\mu}),\;\beta\alpha=O(\varepsilon),\;
 \beta\alpha=O(\mu),\;\beta\alpha^2=O(\mu^2),\;\beta\alpha\varepsilon=O(\mu^{2})
\end{eqnarray}
(note that in the case of flat bottoms, one can take $\beta=0$, so that this set of relations reduce to $\varepsilon=O(\sqrt{\mu})$).\\
Equations for the velocity $u$ are first derived in \S 2.1 and equations for
the surface elevation $\zeta$ are obtained in \S 2.2. The considerations we make on the derivation of these equations are related to the approach initiated by Constantin and Lannes \cite{AD:08}. In addition, in \S 2.3 we recover and justify the KDV equation over a slowly varying depth (formally derived in \cite{kirby,sevendsen,Dingemans}).
\subsection{Equations on the velocity.} \label{subsectdeuxun}
If we want to find an approximation at order $O(\mu^2)$ of the GN equations under the scalings (\ref{cond1}), it is natural to look for $u$ as a solution of (\ref{chgeneral}) with variable coefficients 
 $\widetilde{A}$, $\widetilde{B}$, $\widetilde{E}$, $\widetilde{F}$ to be determined.
We prove in this section that one can associate to the solution of (\ref{chgeneral}) a family of approximate
 solutions consistent with the Green-Naghdi equations (\ref{GN3}) in the following sense:
\begin{definition}\label{defi}
 Let $\wp$ be a family of parameters $\theta=(\varepsilon,\beta,\alpha,\mu)$ satisfying (\ref{cond1}). A family $(\zeta^{\theta},u^{(\theta})_{\theta\in \wp}$ is 
$L^{\infty}$-consistent on $[0,\frac{T}{\varepsilon}]$ with the GN equations (\ref{GN3}), if for all $\theta \in \wp$ (and denoting $h^\theta=1+\varepsilon\zeta^\theta-\beta b^{(\alpha)}$),
$$\left\lbrace
    \begin{array}{l}
\dsp \zeta^{\theta}_t +[h^\theta u^\theta]_x=\mu^{2}r_{1}^{\theta}\vspace{1mm}\\
\dsp u^{\theta}_t+\zeta^{\theta}_x +\varepsilon u^{\theta}u^{\theta}_x=
\frac{\mu}{3h^\theta}
[(h^\theta)^3(u^{\theta}_{xt}+\varepsilon u^{\theta}u^{\theta}_{xx}-\varepsilon (u^{\theta}_x)^2)]_x+\mu^{2}r_{2}^{\theta}
 \end{array}
\right.$$
with $(r_{1}^{\theta},r_{2}^{\theta})_{\theta\in \wp}$ bounded in 
$L^{\infty}([0,\frac{T}{\varepsilon}]\times\R).$
\end{definition}
\begin{remark}\label{remarkconsist}
The notion of $L^{\infty}$-consistency is weaker then the notion of $H^s$-consistency
given in \S 4 (Definition \ref{defiH^s}) and does not allow a full justification of the 
asymptotic models. Since secular growth effects do not allow in general an 
$H^s$-consistency, we state here an $L^{\infty}$-consistency result under very general
assumptions on the topography parameters $\alpha$ and $\beta$. 
$H^s$-consistency and full justification of the models will then be achieved
 under additional assumptions in \S 4.
\end{remark}

The following proposition shows that there is a one parameter family of equations (\ref{chgeneral}) 
 $L^{\infty}$-consistent with the GN equations (\ref{GN3}). (For the sake of simplicity, 
here and throughout the rest of this paper, we take an infinity smooth bottom parameterized by the function $b$).
\begin{proposition}\label{propvelocity}
 Let $b\in H^{\infty}(\R)$ and $p\in \R$. Assume that 
$$A=p, \quad B=p-\frac{1}{6} \quad E=-\frac{3}{2}p-\frac{1}{6},\quad F=-\frac{9}{2}p-\frac{23}{24}.$$
Then:
\begin{itemize}
\item For all family $\wp$ of parameters satisfying (\ref{cond1}),
 \item For all $s\ge 0$ large enough and $T>0$, 
\item For all bounded family $(u^{\theta})_{\theta\in \wp} \in C([0,\frac{T}{\varepsilon}],H^{s}(\R))$ solving (\ref{chgeneral}),
\end{itemize}
the familly $(\zeta^{\theta},u^{\theta})_{\theta\in \wp}$ with (omitting the index $\theta$)\\
\begin{equation}\label{zetaexp}
 \zeta := cu+\frac{1}{2}\int_{-\infty}^{x}c_xu+\frac{\varepsilon}{4}u^{2}+\frac{\mu}{6}c^4u_{xt}-
\varepsilon\mu c^4[\frac{1}{6}uu_{xx}+\frac{5}{48}u_x^{2}],
\end{equation}
is $L^{\infty}$-consistent on $[0,\frac{T}{\varepsilon}]$ with the GN equations (\ref{GN3}).
\end{proposition}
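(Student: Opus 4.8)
The plan is to derive the approximate equation for $\zeta$ directly from the Green-Naghdi system by expressing $\zeta$ as an asymptotic expansion in terms of $u$ and then verifying that the residuals in both equations of Definition~\ref{defi} are $O(\mu^2)$ in $L^\infty$. First I would work at leading order: from the first GN equation in (\ref{GN3}), $\zeta_t + [hu]_x = 0$, and from the second, $u_t + \zeta_x + \dots = O(\mu)$; combining these and using $h = 1+\varepsilon\zeta - \beta b^{(\alpha)}$, one finds at leading order that $\zeta = cu + \dots$ with $c = \sqrt{1-\beta b^{(\alpha)}}$, together with a transport relation $u_t + cu_x + \dots = O(\varepsilon,\mu)$. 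This is the unidirectional reduction: one seeks a solution of the GN system for which the two "directions" decouple, keeping only the right-going one. The key algebraic point is that the expansion of $\zeta$ must be pushed to order $O(\mu^2)$ (equivalently $O(\varepsilon^2)$ in this scaling, since $\varepsilon = O(\sqrt\mu)$), which forces the inclusion of the quadratic term $\frac{\varepsilon}{4}u^2$, the antiderivative topographic correction $\frac12\int_{-\infty}^x c_x u$, the dispersive term $\frac{\mu}{6}c^4 u_{xt}$, and the two nonlinear-dispersive terms $-\varepsilon\mu c^4[\frac16 uu_{xx} + \frac{5}{48}u_x^2]$.

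Next I would substitute the ansatz (\ref{zetaexp}) into the first GN equation and compute $\zeta_t + [h^\theta u^\theta]_x$, using the assumption that $u^\theta$ solves (\ref{chgeneral}) with the stated coefficients to replace time derivatives $u_t$ (and $u_{xt}$, $u_{xxt}$ where they appear) by spatial expressions modulo $O(\mu^2)$; here one repeatedly uses the scaling relations (\ref{cond1}) — in particular $\beta\alpha = O(\mu)$, $\beta\alpha^2 = O(\mu^2)$, $\beta\alpha\varepsilon = O(\mu^2)$ — to discard the numerous terms involving derivatives of $b^{(\alpha)}$ (each $\partial_x$ on $b^{(\alpha)}$ produces a factor $\alpha$). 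All terms of the resulting expression should cancel up to $O(\mu^2)$, and the choice of the constant $\frac12$ in front of $\int c_x u$ and the constant $\frac14$ in front of $u^2$ is exactly what makes this happen. Then I would do the analogous, and more delicate, computation with the second GN equation: plug (\ref{zetaexp}) into $u_t + \zeta_x + \varepsilon uu_x - \frac{\mu}{3h}[h^3(u_{xt} + \varepsilon uu_{xx} - \varepsilon u_x^2)]_x$, again use (\ref{chgeneral}) to eliminate time derivatives, expand $h$ and $c$ and $1/h$ in powers of $\varepsilon,\beta$, and collect terms order by order. Matching the $O(1)$, $O(\varepsilon)$, $O(\mu)$, and $O(\varepsilon\mu)$ contributions to zero yields a linear system for the coefficients that pins down $\tilde A = Ac^5 - Bc^5 + Bc$, $\tilde E = Ec^4 - \frac32 Bc^4 + \frac32 B$, $\tilde F = Fc^4 - \frac92 Bc^4 + \frac92 B$ from Section~\ref{results}, and then substituting the one-parameter family $A = p$, $B = p - \frac16$, $E = -\frac32 p - \frac16$, $F = -\frac92 p - \frac{23}{24}$ should make the $O(\varepsilon\mu)$ obstruction vanish identically in $p$ — this is the content of the proposition, that for any $p$ the reduction is consistent.

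Finally I would address the regularity/boundedness of the residuals: since $(u^\theta)$ is bounded in $C([0,T/\varepsilon], H^s)$ for $s$ large, all the spatial derivatives of $u^\theta$ appearing in the residuals (up to some fixed order, say $u_{xxxxx}$ or so) are bounded in $L^\infty$ by Sobolev embedding, and $b \in H^\infty$ so $c$ and all its derivatives are bounded; the antiderivative $\int_{-\infty}^x c_x u$ is controlled because $c_x = \partial_x\sqrt{1-\beta b^{(\alpha)}}$ is $\beta\alpha$ times a bounded $L^1 \cap L^\infty$ function (assuming $b' \in L^1$, or one restricts to the relevant class), so $\int_{-\infty}^x c_x u$ is bounded uniformly. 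Time derivatives $u_t$ are converted to spatial ones via (\ref{chgeneral}), which only involves $B \le 0$ through an elliptic operator $(1 + B\mu\partial_x^2)$ that is invertible with bounds uniform in $\mu$, so $u_t$ is bounded in $H^{s-3}$ and hence in $L^\infty$. The main obstacle will be the second step: organizing the very large algebraic computation in the second GN equation so that one can actually see the $O(\varepsilon\mu)$ terms cancel — there are many contributions (from $\zeta_x$ acting on each term of (\ref{zetaexp}), from expanding $\frac{\mu}{3h}[h^3(\cdots)]_x$, and from the time-derivative substitutions generating further nonlinear-dispersive terms), and keeping track of which combinations of $A,B,E,F$ multiply each monomial $uu_{xxx}$, $u_x u_{xx}$, $u_{xxx}$, $u_{xxt}$, etc., is where all the real work lies; the rest is bookkeeping with the scaling relations and routine Sobolev estimates.
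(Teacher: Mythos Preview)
Your verification-by-substitution approach is valid and would ultimately succeed, but it differs in organization from the paper's proof and contains one confusion. The paper proceeds in three steps: first (Lemma~\ref{lemma1}) it shows that equation~(\ref{chgeneral}) with the variable coefficients $\tilde A,\tilde E,\tilde F$ is equivalent modulo $O(\mu^2)$ to an equation with \emph{constant} coefficients $A,B,E,F$ (this uses the identity $u_t=-(cu_x+\frac32\varepsilon uu_x)+O(\mu)$ to trade the $B$-pieces back and forth), which considerably streamlines the algebra. Second, it uses the \emph{second} GN equation to determine the corrector $v$ in $\zeta=cu+\varepsilon v$ (this is where (\ref{zetaexp}) comes from, via (\ref{eqon v})--(\ref{eq3})). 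Third, it plugs this $\zeta$ into the \emph{first} GN equation; equating coefficients there yields $a=B-A=-\tfrac16$, $e=E+\tfrac32 A=-\tfrac16$, $d=\tfrac12(F+3A-E)=-\tfrac{19}{48}$, which is the one-parameter family in the statement. Your plan reverses the roles of the two equations and treats (\ref{zetaexp}) as given rather than derived; that is fine, but note that $\tilde A,\tilde E,\tilde F$ are \emph{definitions} built into the hypothesis that $u$ solves (\ref{chgeneral}) --- they are not outputs of the matching, so your sentence ``pins down $\tilde A=\dots$'' misidentifies what is being determined.

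Two further points. The secular term you mention only in passing is actually the crux of why this is merely $L^\infty$- and not $H^s$-consistency: when you differentiate $\tfrac12\int_{-\infty}^x c_x u$ in time and substitute $u_t=-cu_x+O(\mu)$, you must integrate by parts inside the integral to extract $-\tfrac12 cc_x u$, and the leftover $\int_{-\infty}^x(\cdots)$ is only bounded in $L^\infty$ (Lemma~\ref{lemma2} and the identity at the start of Step~3); this deserves an explicit sentence. Finally, your remark about inverting $(1+B\mu\partial_x^2)$ is unnecessary here: the proposition \emph{assumes} a bounded family $u^\theta\in C([0,T/\varepsilon];H^s)$, so $u_t$ is obtained iteratively from (\ref{chgeneral}) (first $u_t=-cu_x+O(\varepsilon,\mu)$, then refined), not by elliptic inversion --- well-posedness is a separate issue treated later in Theorem~\ref{th1}.
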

\begin{remark}\label{remaAD}
 If we take $b=0$ -i.e if we consider a flat bottom-, then 
 one can recover the equation (7) of \cite{AD:08} and the equations (26a) and (26b) of \cite{Jo} with $p=-\frac{1}{12}$ and $p=\frac{1}{6}$ respectively.
\end{remark}
\begin{proof}
For the sake of simplicity, we denote by $O(\mu)$ 
any family of functions $(f^\theta)_{\theta\in\wp}$
such that $\displaystyle\frac{1}{\mu}f^\theta$ 
 remains bounded in $L^{\infty}([0,\frac{T}{\varepsilon}],H^{r}(\R))$
for all $\theta\in\wp$, (and for possibly different values of $r$). The same notation is also used for 
real numbers, e.g $\varepsilon=O({\mu})$, but this should not yield any confusion. We use the notation $O_{L^{\infty}}(\mu)$ if $\displaystyle\frac{1}{\mu}f^\theta$ remains bounded in $L^{\infty}([0,\frac{T}{\varepsilon}]\times\R)$.
 Of course, similar notations are used for $O(\mu^{2})$ etc. To alleviate the text,  we also
omit the index $\theta$ and write $u$ instead of $u^{\theta}$.\\
$\textbf{Step 1.}$ We begin the proof by the following Lemma where a new class of equation
is deduced from (\ref{chgeneral}). The coefficients $A$, $B$, $E$, $F$ in this new class of equations
are constants (as opposed to $\widetilde{A}$, $\widetilde{E}$ and $\widetilde{F}$ in (\ref{chgeneral}) that are functions of $x$).
\begin{lemma}\label{lemma1}
 Under the assumptions of Proposition \ref{propvelocity}, there is a family  $(R^{\theta})_{\theta\in \wp}$ bounded in 
$L^{\infty}([0,\frac{T}{\varepsilon}],H^{r}(\R))$ (for some $r<s$)
such that (omitting the index $\theta$)
\begin{eqnarray}
\label{chconsistent}
&u_t+cu_x+\frac{3}{2}c_xu+\frac{3}{2}\varepsilon u u_x +\mu c^5A u_{xxx}
 +\mu B \partial_x(c^4u_{xt}) \\&\qquad=\varepsilon\mu c^{4}E u u_{xxx}+\varepsilon\mu\frac{1}{2}F (c^{4}u)_x u_{xx}+\varepsilon\mu\frac{1}{2}Fu_x(c^{4}u)_{xx}+\mu^2R.\nonumber
\end{eqnarray}
\end{lemma}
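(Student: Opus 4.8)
The plan is to obtain (\ref{chconsistent}) from (\ref{chgeneral}) by the classical ``BBM substitution'': use the equation itself to trade the time derivatives hidden in the dispersive terms for space derivatives, and then verify that, once the explicit forms of $\tilde A,\tilde E,\tilde F$ are inserted, all the topography--dependent corrections collapse into an $O(\mu^2)$ remainder. I keep the conventions of the proof of Proposition \ref{propvelocity} for $O(\mu^k)$, and I use repeatedly that, by (\ref{cond1}), $c_x=O(\beta\alpha)=O(\mu)$ and $c_{xx}=O(\beta\alpha^2+(\beta\alpha)^2)=O(\mu^2)$, while $c^4-1=-\beta b^{(\alpha)}\big(2-\beta b^{(\alpha)}\big)=O(\beta)$.

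First, since $(u^\theta)$ is bounded in $C([0,\tfrac{T}{\varepsilon}],H^s)$ and solves (\ref{chgeneral}), solving for $u_t$ (this requires inverting $1+\mu B\partial_x^2$, whose symbol is $\ge 1$ when $B\le0$ and which is the identity when $B=0$) gives, in $H^{s-3}$,
\[
u_t=-cu_x-\tfrac32 c_x u-\tfrac32\varepsilon u u_x+O(\mu)=-cu_x-\tfrac32\varepsilon u u_x+O(\mu),
\]
the last equality because $c_x u=O(\mu)$. Differentiating once and twice in $x$ and discarding the terms where a derivative falls on $c$ one gets $u_{xt}=O(1)$ and $u_{xxt}=-cu_{xxx}-\tfrac32\varepsilon\partial_x^2(uu_x)+O(\mu)$; multiplying the latter by $\mu B$ gives $\mu Bu_{xxt}=-\mu Bc\,u_{xxx}-\tfrac32\varepsilon\mu B\partial_x^2(uu_x)+O(\mu^2)$, and since $\partial_x(c^4u_{xt})=c^4u_{xxt}+4c^3c_xu_{xt}$ with $4c^3c_xu_{xt}=O(\mu)$, also $\mu B\partial_x(c^4u_{xt})=-\mu Bc^5u_{xxx}-\tfrac32\varepsilon\mu Bc^4\partial_x^2(uu_x)+O(\mu^2)$. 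Using $\tilde A=Ac^5-B(c^5-c)$ this yields
\[
\mu\tilde A u_{xxx}+\mu B u_{xxt}=\mu Ac^5u_{xxx}+\mu B\partial_x(c^4u_{xt})+\tfrac32\varepsilon\mu B(c^4-1)\partial_x^2(uu_x)+O(\mu^2),
\]
so passing from the variable dispersion of (\ref{chgeneral}) to the $c$--weighted constant dispersion of (\ref{chconsistent}) costs precisely the single extra term $\tfrac32\varepsilon\mu B(c^4-1)\partial_x^2(uu_x)$.

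On the nonlinear side, inserting $\tilde E=c^4E-\tfrac32 B(c^4-1)$ and $\tilde F=c^4F-\tfrac92 B(c^4-1)$ and using $\partial_x\big(\tfrac{Fc^4}{2}u\big)=\tfrac F2(c^4u)_x$, the right-hand side of (\ref{chgeneral}) equals the right-hand side of (\ref{chconsistent}) (without $\mu^2R$) plus $-\tfrac32\varepsilon\mu B(c^4-1)uu_{xxx}-\tfrac94\varepsilon\mu B\big(\partial_x((c^4-1)u)u_{xx}+u_x\partial_x^2((c^4-1)u)\big)$. Combining this with the preceding paragraph, subtracting (\ref{chconsistent}) from (\ref{chgeneral}) shows that $\mu^2R$ equals, modulo quantities already seen to be $O(\mu^2)$, the expression $-\varepsilon\mu B$ times
\[
\tfrac32(c^4-1)\partial_x^2(uu_x)+\tfrac32(c^4-1)uu_{xxx}+\tfrac94\Big(\partial_x((c^4-1)u)u_{xx}+u_x\partial_x^2((c^4-1)u)\Big).
\]
Using the identity $\partial_x^2(uu_x)=3u_xu_{xx}+uu_{xxx}$ and the Leibniz rule, every derivative that falls on $c^4-1$ produces a factor $\partial_x c^4=O(\mu)$ or $\partial_x^2 c^4=O(\mu^2)$, hence an $O(\mu^2)$ contribution after the $\varepsilon\mu B$, while the part in which no derivative hits $c^4-1$ collapses, by the same identity, to $-3\varepsilon\mu B(c^4-1)\partial_x^2(uu_x)$. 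This last term is of size $\sim\varepsilon\mu(c^4-1)$ and is absorbed into $\mu^2R$ by means of the relations in (\ref{cond1}) that go beyond the flat--bottom requirement $\varepsilon=O(\sqrt\mu)$ (this is the role of $\beta\alpha\varepsilon=O(\mu^2)$, $\beta\alpha^2=O(\mu^2)$, $\beta\alpha=O(\varepsilon)$). Taking $R^\theta$ to be the resulting explicit polynomial expression in $u,u_x,\dots$ with coefficients built from $c$ and its $x$--derivatives, its boundedness in $L^\infty([0,\tfrac{T}{\varepsilon}],H^r)$ for a suitable $r<s$ is immediate from the bound on $(u^\theta)$ in $C([0,\tfrac{T}{\varepsilon}],H^s)$ and from $b\in H^\infty(\R)$, since every step above costs only finitely many derivatives.

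The step I expect to be the main obstacle is the last one: keeping track of every term of order $\mu$ and of order $\varepsilon\mu$ produced by the substitution and by the expansion of $\tilde A,\tilde E,\tilde F$, and checking that, modulo quantities manifestly of size $\mu^2$, they cancel --- the delicate point being that the surviving purely topographic correction must genuinely be reabsorbed into $\mu^2R$, which is exactly what the extra scaling relations in (\ref{cond1}) are designed to permit.
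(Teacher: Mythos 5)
Your strategy is the same as the paper's: expand $\tilde A,\tilde E,\tilde F$ into their ``$c$--power times constant'' parts plus $B(c^4-1)$--corrections, use $u_{xxt}=-cu_{xxx}-\frac{3}{2}\varepsilon\partial_x^2(uu_x)+O(\mu)$ to convert $\mu Bu_{xxt}$ into $\mu B\partial_x(c^4u_{xt})$, and check that the corrections collapse. Your algebra is careful up to the identification of the residual $-3\varepsilon\mu B(c^4-1)\partial_x^2(uu_x)$, but the very last step is a genuine gap: this term is \emph{not} $O(\mu^2)$ under (\ref{cond1}). As you yourself note, $c^4-1=-\beta b^{(\alpha)}\big(2-\beta b^{(\alpha)}\big)=O(\beta)$ --- a factor of $\beta$ with \emph{no} accompanying $\alpha$ --- whereas every relation in (\ref{cond1}) that you invoke ($\beta\alpha=O(\varepsilon)$, $\beta\alpha=O(\mu)$, $\beta\alpha^2=O(\mu^2)$, $\beta\alpha\varepsilon=O(\mu^2)$) only controls products containing $\alpha$. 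Taking for instance $\beta=1$, $\alpha=\mu^2$, $\varepsilon=\sqrt\mu$ satisfies (\ref{cond1}) yet gives $\varepsilon\mu\,(c^4-1)\sim\mu^{3/2}$, which is not $O(\mu^2)$. Only the terms where a derivative falls on $c^4-1$ are genuinely small; $c^4-1$ itself is not, so the residue cannot be absorbed into $\mu^2R$.

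For comparison, the paper never meets this leftover: it groups the three topographic corrections $\mu(\tilde A-Ac^5)u_{xxx}$, $\varepsilon\mu(\tilde E-Ec^4)uu_{xxx}$ and the two $\tilde F-Fc^4$ terms \emph{with the same sign}, and shows that this sum collapses exactly to $-\mu B\partial_x\big[(c^4-1)\partial_x(cu_x+\frac{3}{2}\varepsilon uu_x)\big]+O(\mu^2)=\mu B\partial_x((c^4-1)u_{xt})+O(\mu^2)$, i.e.\ precisely the quantity needed to turn $\mu Bu_{xxt}$ into $\mu B\partial_x(c^4u_{xt})$ with nothing left over. In your bookkeeping the $\tilde E,\tilde F$ corrections enter with the sign opposite to that of the $\tilde A$ correction (they sit on the other side of the equality), and it is exactly this relative sign that turns the paper's clean cancellation into your residue $-3\varepsilon\mu B(c^4-1)\partial_x^2(uu_x)$. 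You must resolve this discrepancy: either the corrections reinforce each other as in the paper's computation and the residue disappears identically, or the residue is real and the conclusion requires an additional hypothesis of the type $\varepsilon\beta=O(\mu)$ (e.g.\ $\beta=O(\varepsilon)$, which is indeed assumed later in Theorem \ref{th3} but not in Proposition \ref{propvelocity}). As written, your argument cannot conclude that $R$ is bounded.
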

\begin{proof}
 Remark that the relation $\alpha\beta=O(\mu)$ and the definitions of $\tilde{A}$, $\tilde{B}$, and $\tilde{E}$ in terms of $A$, $B$, $E$ and $F$ imply that 
\begin{eqnarray*}
\lefteqn{\mu(-Bc^5+Bc)u_{xxx}
+\varepsilon\mu (-\frac{3}{2}Bc^4+\frac{3}{2}B)uu_{xxx}}\nonumber\\
&&
 +\varepsilon\mu\partial_x\Big(\Big(\frac{-\frac{9}{2}Bc^4+\frac{9}{2}B}{2}\Big)u\Big)u_{xx}
+\varepsilon\mu u_x\partial^2_x\Big(\Big(\frac{-\frac{9}{2}Bc^4+\frac{9}{2}B}{2}\Big)u\Big)\nonumber\\
&&=-\mu B \partial_x(c(c^4-1)\partial_xu_x)-\frac{3}{2}\mu \varepsilon B \partial_x((c^4-1)\partial_x(uu_x))\nonumber\\
&&= -\mu B \partial_x\Big[(c^4-1)\partial_x(cu_x+\frac{3}{2}\varepsilon uu_x) \Big]+O(\mu^2)\nonumber\\
&&=\mu B \partial_x((c^4-1)u_{xt})+O(\mu^2),\nonumber
\end{eqnarray*}
the last line being a consequence of the identity $u_t=-(cu_x+\frac{3}{2}\varepsilon uu_x)+O(\mu)$ provided 
by (\ref{chgeneral}) since we have $$\vert c_xu\vert_{H^r}=\big\vert-\frac{1}{2c}\beta\alpha b^{(\alpha)}_xu\big\vert_{H^r}\leq \cst\;\alpha\beta
 \big\vert \partial_x b^{(\alpha)}\big\vert_{W^{[r]+1,\infty}} \vert u\vert_{H^r}=O(\beta\alpha)=O(\mu)$$
 where $[r]$ is the largest integer smaller or equal to $r$. 
  The equation (\ref{chgeneral}) can thus be written under the form:
\begin{eqnarray*}
&u_t+cu_x+\frac{3}{2}c_xu+\frac{3}{2}\varepsilon u u_x +\mu c^5A u_{xxx}
 +\mu B \partial_x(c^4u_{xt}) \\&\qquad=\varepsilon\mu c^{4}E u u_{xxx}+\varepsilon\mu\frac{1}{2}F (c^{4}u)_x u_{xx}+\varepsilon\mu\frac{1}{2}Fu_x(c^{4}u)_{xx}+O(\mu^2),
\end{eqnarray*}
which is exactly the result stated in the Lemma.
\end{proof} 
 If $u$ solves (\ref{chgeneral}) one also has 
\begin{eqnarray}
\label{eq1}
  u_t + cu_x +\frac{3}{2} \varepsilon uu_x &=&-\frac{3}{2}c_xu+O(\mu)\\
                                           &=&O(\mu),\nonumber
\end{eqnarray}
 Differentiating (\ref{eq1}) twice with respect to $x$, and using again the fact that $c_xf=O(\beta\alpha)=O(\mu)$ for all $f$ smooth enough,
  one gets 
\begin{equation*}
cu_{xxx}=-u_{xxt}-\frac{3}{2} \varepsilon \partial_x^2(uu_x)+O(\mu).
\end{equation*}
It is then easy to deduce that 
\begin{eqnarray*}
c^5u_{xxx}&=&-c^4u_{xxt}-\frac{3}{2} c^4\varepsilon\partial_x^2(uu_x)+O(\mu)\\
          &=&-\partial_x(c^4u_{xt})-\frac{3}{2}\varepsilon c^4(uu_{xxx}+3u_xu_{xx})+O(\mu)
\end{eqnarray*}
so that we can replace the $c^5u_{xxx}$ term of (\ref{chconsistent}) by this expression. By using Lemma \ref{lemma1}, one gets therefore the following equation where the linear term in $u_{xxx}$ has
been removed:
\begin{equation}
  u_t +cu_x+\frac{3}{2}c_xu+\frac{3}{2} \varepsilon uu_x + \mu c^4au_{xxt} =\varepsilon\mu c^4[euu_{xx}+du_x^2]_x
+O(\mu^{2})
\label{eq2}
\end{equation}
with $a=B-A$, $e=E+\frac{3}{2}A$, $d=\frac{1}{2}(F+3A-E)$.\\
$\textbf{Step 2.}$ We seek $v$ such that if $\zeta :=cu+\varepsilon v$ and $u$ solves (\ref{chgeneral}) then the second equation of (\ref{GN3}) is satisfied up to a $O(\mu^{2})$ term. This is equivalent to checking that 
\begin{eqnarray*}
	u_t+[cu+\varepsilon v]_x+\varepsilon uu_x
	&=&
        \mu(1-\beta b^{(\alpha)})\varepsilon\partial_x(cu)u_{xt}+\frac{\mu}{3}((1-\beta b^{(\alpha)})+\varepsilon cu)^2u_{xxt}\\
        &&+\frac{\varepsilon\mu}{3}(1-\beta b^{(\alpha)})^2(uu_{xx}-u_x^2)_x+O(\mu^2)\\
        &=&
        \frac{\mu}{3}c^4u_{xxt}+\varepsilon\mu \Big(c^3u_xu_{xt}+\frac{2}{3}c^3uu_{xxt}+\frac{c^4}{3}(uu_{xx}-u_x^2)_x\Big)\\
        &&+O(\mu^2),
\end{eqnarray*}
where we used the relations $O(\varepsilon^2)=O(\mu)$, $O(\beta\alpha)=O(\mu)$ and the fact that $c^2=1-\beta b^{(\alpha)}$. This condition can be recast under the form 
\begin{eqnarray*}
 &&\varepsilon v_x +[u_t +cu_x+\frac{3}{2}c_xu+\frac{3}{2} \varepsilon uu_x + \mu c^4 au_{xxt}-\varepsilon\mu c^4[euu_{xx}+du_x^2]_x]\\
&=&\frac{1}{2}c_xu+\frac{\varepsilon}{2}uu_x+\mu c^4(a+\frac{1}{3})u_{xxt}\\&&+
\varepsilon\mu c^3\Big(u_xu_{xt}+\frac{2}{3}uu_{xxt}+c[(\frac{1}{3}-e)uu_{xx}-(\frac{1}{3}+d)u_x^2]_x\Big)+O(\mu^2).
\end{eqnarray*}
Since morever one gets from (\ref{eq1}) that $u_{xt}=-cu_{xx}+O(\mu,\varepsilon)$ and $u_{xxt}=-cu_{xxx}+O(\mu,\varepsilon)$, one gets readily 
\begin{eqnarray*}
 &&\varepsilon v_x +[u_t +cu_x+\frac{3}{2}c_xu+\frac{3}{2} \varepsilon uu_x + \mu c^4 au_{xxt}-\varepsilon\mu[euu_{xx}+du_x^2]_x]\\
&&=\frac{1}{2}c_xu+\frac{\varepsilon}{2}uu_x+\mu c^4(a+\frac{1}{3})u_{xxt}-\varepsilon\mu c^4[(e+\frac{1}{3})uu_{xx}+
(d+\frac{1}{2})u_x^2]_x+O(\mu^{2}).
\end{eqnarray*}
From Step 1, we know that the term between brackets in the lhs of this equation is of order
$O(\mu^2)$ so that the second equation of (\ref{GN3}) is satisfied up to $O(\mu^2)$ terms if 
\begin{equation}\label{eqon v}
 \varepsilon v_x=\frac{1}{2}c_xu+\frac{\varepsilon}{2}uu_x+\mu c^4(a+\frac{1}{3})u_{xxt}-\varepsilon\mu c^4[(e+\frac{1}{3})uu_{xx}+(d+\frac{1}{2})u_x^2]_x +O(\mu^{2}).
\end{equation}
At this point we need also the following lemma
\begin{lemma}\label{lemma2}
With $u$ and $b$ as in the statement of  Proposition \ref{propvelocity}, 
the mapping $(t,x)\longrightarrow\int_{-\infty}^{x}c_xu\; dx $ is well defined on $[0,\frac{T}{\varepsilon}]\times\R$.\\ Morever one has that
$$
\Big\vert\int_{-\infty}^{x}c_xu \;dx\Big\vert_{L^{\infty}([0,\frac{T}{\varepsilon}]\times\R)}\leq \cst \sqrt{\alpha}
\beta \vert b_x\vert_2 \vert u\vert_2.
$$
\end{lemma}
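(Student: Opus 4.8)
The plan is to reduce the statement to an explicit formula for $c_x$ followed by a single Cauchy--Schwarz estimate, the crucial point being that the rescaling $y=\alpha x$ turns the two powers of $\alpha$ coming from $\partial_x b^{(\alpha)}=\alpha b'(\alpha\cdot)$ into a net gain of $\sqrt\alpha$ once one measures $c_x$ in $L^2$.

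First I would record, from $c=\sqrt{1-\beta b^{(\alpha)}}$ and $b^{(\alpha)}(x)=b(\alpha x)$, the identity
\[
c_x(x)=-\frac{\beta\alpha\, b'(\alpha x)}{2\sqrt{1-\beta b^{(\alpha)}(x)}}.
\]
Under the standing assumption that the fluid domain is non-degenerate (so that $1-\beta b^{(\alpha)}$ is bounded from below by a positive constant uniformly in $x$, which is in particular the case when $\beta$ is small), this gives the pointwise bound $\vert c_x(x)\vert\le\cst\,\beta\alpha\,\vert b'(\alpha x)\vert$, hence $c_x\in L^2(\R)$ and, by the change of variables $y=\alpha x$,
\[
\vert c_x\vert_2^2\le\cst\,\beta^2\alpha^2\int_\R b'(\alpha x)^2\,dx=\cst\,\beta^2\alpha\int_\R b'(y)^2\,dy=\cst\,\beta^2\alpha\,\vert b_x\vert_2^2 .
\]
Thus $\vert c_x\vert_2\le\cst\,\beta\sqrt\alpha\,\vert b_x\vert_2$.

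Next, for fixed $t\in[0,\frac T\varepsilon]$ and $x\in\R$, Cauchy--Schwarz yields
\[
\int_{-\infty}^{x}\vert c_x(x')\,u(t,x')\vert\,dx'\le\vert c_x\vert_{L^2(\R)}\,\vert u(t,\cdot)\vert_{L^2(\R)}\le\cst\,\beta\sqrt\alpha\,\vert b_x\vert_2\,\vert u(t,\cdot)\vert_2 ;
\]
in particular the integrand is absolutely integrable on $(-\infty,x)$, so the mapping $(t,x)\mapsto\int_{-\infty}^{x}c_xu\,dx'$ is well defined, and the bound above is uniform in $(t,x)$. Finally, since the family $(u^\theta)_{\theta\in\wp}$ is bounded in $C([0,\frac T\varepsilon],H^s(\R))$, one has $\sup_{t\in[0,T/\varepsilon]}\vert u(t,\cdot)\vert_2\le\cst$, which gives the claimed $L^\infty([0,\frac T\varepsilon]\times\R)$ estimate.

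The argument is elementary and I do not expect a genuine obstacle; the only thing that requires care is the bookkeeping of the powers of $\alpha$ in the change of variables (one factor $\alpha^2$ from $\partial_x b^{(\alpha)}$, one factor $\alpha^{-1}$ from $dx$), and this is exactly the favorable $\sqrt\alpha$ that will be exploited later to control the secular contributions of the topography in the consistency analysis.
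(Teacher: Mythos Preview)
Your proof is correct and follows essentially the same route as the paper: the paper's argument is precisely Cauchy--Schwarz applied to $\int_{-\infty}^x c_x u$, together with the pointwise bound $\vert c_x\vert\le\cst\,\alpha\beta\vert (b_x)^{(\alpha)}\vert$ and the change of variables $\vert (b_x)^{(\alpha)}\vert_2=\alpha^{-1/2}\vert b_x\vert_2$. You simply perform the change of variables before the Cauchy--Schwarz step rather than after, and spell out the well-definedness via absolute integrability, which the paper leaves implicit.
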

\begin{proof}
We used here the Cauchy-Schwarz inequality and the definition $c^2=1-\beta b^{(\alpha)}$ to get
$$
\int_{-\infty}^{x}c_xu \;dx\leq \cst\;\alpha\beta\vert (b_x)^{(\alpha)} \vert_{2}\vert u \vert_2 \leq\cst\;
\sqrt{\alpha}\beta\vert b_x \vert_{2}\vert u \vert_2< \infty.
$$
It is then easy to conclude the proof of the lemma.
\end{proof}
Thanks to this lemma there is a solution $v\in C([0,\frac{T}{\varepsilon}]\times\R)$ to (\ref{eqon v}), namely
\begin{equation}
 \varepsilon v=\frac{1}{2}\int_{-\infty}^{x}c_xu+\frac{\varepsilon}{4}u^{2}+\mu c^4(a+\frac{1}{3})u_{xt}-\varepsilon\mu c^4[(e+\frac{1}{3})uu_{xx}+
(d+\frac{1}{2})u_x^2].
\label{eq3}
\end{equation}
$\textbf{Step 3.}$ We show here that it is possible to choose the coefficients $A$, $B$, $E$, $F$ such that the first equation of (\ref{GN3}) is also satisfied up to $O_{L^\infty}(\mu^{2})$ terms.
This is equivalent to checking that
\begin{equation}
[cu+\varepsilon v]_t +[(1+\varepsilon(cu+\varepsilon v)-\beta b^{(\alpha)})u]_x =0
\label{eq4}
\end{equation}
Remarking that the relations $O(\beta\alpha)=O(\mu)$, $O(\beta\alpha^2)=O(\mu^2)$, and $O(\beta\alpha\varepsilon)=O(\mu^2)$ imply that
$$\frac{1}{2}\int_{-\infty}^{x}c_xu_t=-\frac{1}{2}cc_xu+O_{L^{\infty}}(\mu^2),$$
one infers from (\ref{eq3}) that
\begin{eqnarray*}
\varepsilon v_t&=&\frac{1}{2}\int_{-\infty}^{x}c_xu_t+\frac{\varepsilon}{2}uu_t +\mu c^4(a+\frac{1}{3})u_{xtt}-\varepsilon\mu c^4[(e+\frac{1}{3})uu_{xx}+
(d+\frac{1}{2})u_x^2]_t\\
&=& -\frac{1}{2}cc_xu-\frac{\varepsilon}{2}u(cu_x+\frac{3\varepsilon}{2}uu_x+\mu c^4au_{xxt})
-\mu c^4(a+\frac{1}{3})
\partial_{xt}^2(cu_x+\varepsilon\frac{3}{2}uu_x)
\\&&+\varepsilon\mu c^5[(e+\frac{1}{3})uu_{xx}+
(d+\frac{1}{2})u_x^2]_x+O(\mu^{2})+O_{L^{\infty}}(\mu^2)\\
&=&-\frac{1}{2}cc_xu-\varepsilon\frac{1}{2}cuu_x-\varepsilon^2\frac{3}{4}u^2u_x -\mu(a+\frac{1}{3})c^5u_{xxt}
\\&&+ \varepsilon\mu c^5[(2a+e+\frac{5}{6})uu_{xx}+(\frac{5}{4}a+d+1)u_x^2]_x+O(\mu^{2})+O_{L^{\infty}}(\mu^2).
\end{eqnarray*}
Similarly, one gets 
\begin{equation*}
 \varepsilon^2[vu]_x=\varepsilon^2\frac{3}{4}u^2u_x -\varepsilon\mu c^5(a+\frac{1}{3})[uu_{xx}]_x + O(\mu^{2})+O_{L^{\infty}}(\mu^2),
\end{equation*}
 so (\ref{eq4}) is equivalent to 
\begin{eqnarray*}
 cu_t+\varepsilon v_t+c^2u_x+2cc_xu+2\varepsilon cuu_x +\varepsilon^2[vu]_x =O_{L^{\infty}}(\mu^2).
\end{eqnarray*}
Multiplying by $\displaystyle\frac{1}{c}$, we get
\begin{eqnarray*}
 &&u_t+cu_x+\frac{3}{2}c_xu +\varepsilon\frac{3}{2}uu_x-\mu c^4(a+\frac{1}{3})u_{xxt}\\
&=&\varepsilon\mu c^4[-(e+a+\frac{1}{2})uu_{xx}-(\frac{5}{4}a+d+1)u_x^2]_x+O_{L^{\infty}}(\mu^{2}). 
\end{eqnarray*}
Equating the coefficients of this equation with those of (\ref{eq2}) shows that the first equation of (\ref{GN3})
is also satisfied at order $O_{L^\infty}(\mu^2)$ if the following relations hold:
\begin{equation*}
 a=-\frac{1}{6},\quad e=-\frac{1}{6}, \quad d=-\frac{19}{48},
\end{equation*}
and the conditions given in the statement of the proposition on $A$, $B$, $E$, and $F$ follows from the expressions of $a$, $e$ and $d$  given after (\ref{eq2}).
\end{proof}
\subsection{Equations on the surface elevation.} \label{subsectdeuxdeux}
Proceeding exactly as in the proof of Proposition \ref{propvelocity}, one can prove that the family of equations on the surface elevation
\begin{eqnarray}\label{chgeneralzeta}
 \lefteqn{\zeta_t+c\zeta_x+\frac{1}{2}c_x\zeta+\frac{3}{2c} \varepsilon\zeta\zeta_x
-\frac{3}{8c^3}\varepsilon^2\zeta^2\zeta_x+
\frac{3}{16c^5}\varepsilon^3\zeta^3\zeta_x}\nonumber\\&&
+\mu(\tilde{A}\zeta_{xxx}+ B\zeta_{xxt})=
\varepsilon\mu \tilde{E}\zeta\zeta_{xxx}
+\varepsilon\mu\Big(\partial_x(\frac{\tilde{F}}{2}\zeta)\zeta_{xx}
+ \zeta_x\partial^2_x(\frac{\tilde{F}}{2}\zeta)\Big),
\end{eqnarray}
where
\begin{eqnarray*}
 \tilde{A}&=&Ac^5-Bc^5+Bc \\
\tilde{E}&=&Ec^3-\frac{3}{2}Bc^3+\frac{3}{2c}B
 \\\tilde{F}&=&Fc^3-\frac{9}{2}Bc^3+\frac{9}{2c}B,
\end{eqnarray*}
 can be used to construct an approximate solution 
consistent with the Green-Naghdi equations:
\begin{proposition}\label{propzeta}
  Let $b\in H^{\infty}(\R)$ and $q\in \R$. Assume that 
$$A=q, \quad B=q-\frac{1}{6} \quad E=-\frac{3}{2}q-\frac{1}{6},\quad F=-\frac{9}{2}q-\frac{5}{24}.$$
Then:
\begin{itemize}
\item For all family $\wp$ of parameters satisfying (\ref{cond1}),
 \item For all $s\ge 0$ large enough  and $T>0$, 
\item For all bounded family $(\zeta^{\theta})_{\theta\in \wp} \in C([0,\frac{T}{\varepsilon}],H^{s}(\R))$ solving (\ref{chgeneralzeta}),
\end{itemize}
the familly $(\zeta^{\theta},u^{\theta})_{\theta\in \wp}$ with (omitting the index $\theta$)
\begin{eqnarray}
 \lefteqn{u:= \frac{1}{c}\Big(\zeta+\frac{c^2}{c^2+\varepsilon\zeta}\Big(-\frac{1}{2}\int_{-\infty}^{x}\frac{c_x}{c}\zeta
        -\frac{\varepsilon}{4c^2}\zeta^2
	-\frac{\varepsilon^2}{8c^4}\zeta^3
	+\frac{3\varepsilon^3}{64c^6}\zeta^4}\label{exp u}\\&&
	\qquad-\mu\frac{1}{6}c^3\zeta_{xt}
	+\varepsilon\mu c^2\big[\frac{1}{6}\zeta\zeta_{xx}+\frac{1}{48}
	\zeta_x^2\big]\Big)\Big)\nonumber
\end{eqnarray}
is $L^\infty$-consistent  on $[0,\frac{T}{\varepsilon}]$ with the GN equations (\ref{GN3}).
\end{proposition}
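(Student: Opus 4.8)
The plan is to reproduce, nearly verbatim, the three-step argument of the proof of Proposition \ref{propvelocity}, the only structural change being that the two equations of the Green--Naghdi system (\ref{GN3}) play interchanged roles: here we start from a solution $\zeta$ of (\ref{chgeneralzeta}) and want to \emph{reconstruct} a velocity, so it is the continuity equation (the first equation of (\ref{GN3})) that will be integrated in $x$ to produce $u$, while the momentum equation (the second one) will yield the algebraic constraints on $A,B,E,F$. All the conventions for $O(\mu)$, $O(\mu^2)$, $O_{L^\infty}(\mu^2)$ and all the scaling relations (\ref{cond1}) are kept. Two features make this more than a mere transcription of the velocity case, and they are where the work lies: because $\zeta=cu+O(\varepsilon)$, the powers $c^4$ in front of the nonlinear dispersive terms of (\ref{chgeneral}) become the powers $c^3$ and $1/c$ visible in (\ref{chgeneralzeta}); and the same change of unknown produces the extra nonlinearities $\varepsilon^2\zeta^2\zeta_x$ and $\varepsilon^3\zeta^3\zeta_x$ already present in (\ref{chgeneralzeta}), which cannot be neglected since $\varepsilon=O(\sqrt{\mu})$ makes them of respective orders $\mu$ and $\mu^{3/2}$, only terms of order $\varepsilon^4=O(\mu^2)$ being negligible.

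\textbf{Step 1 (reduction to constant coefficients).} Exactly as in Lemma \ref{lemma1}, we use $\alpha\beta=O(\mu)$, the definitions of $\tilde A,\tilde E,\tilde F$ in terms of $A,B,E,F$, and the leading-order identity $\zeta_t=-c\zeta_x+O(\mu)$ (legitimate since $c_x\zeta=O(\beta\alpha)=O(\mu)$, by the estimate used for $c_x u$ in the proof of Proposition \ref{propvelocity}) to absorb all topographic corrections into the $\mu B\,\partial_x(c^4\zeta_{xt})$ term modulo $O(\mu^2)$, thus rewriting (\ref{chgeneralzeta}) with the \emph{constants} $A,B,E,F$ in front of its $\mu$- and $\varepsilon\mu$-terms. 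Differentiating $\zeta_t+c\zeta_x+\frac{3}{2c}\varepsilon\zeta\zeta_x=O(\mu)$ twice in $x$ and multiplying by $c^4$ (freely commuting powers of $c$ past $x$-derivatives at the cost of $O(\mu)$, since $c_x=O(\mu)$) gives $c^5\zeta_{xxx}=-\partial_x(c^4\zeta_{xt})-\frac{3}{2}\varepsilon c^3(\zeta\zeta_{xxx}+3\zeta_x\zeta_{xx})+O(\mu)$, which removes the linear dispersive term. The outcome is an equation of the form
\[
\zeta_t+c\zeta_x+\frac12 c_x\zeta+\frac{3}{2c}\varepsilon\zeta\zeta_x-\frac{3}{8c^3}\varepsilon^2\zeta^2\zeta_x+\frac{3}{16c^5}\varepsilon^3\zeta^3\zeta_x+\mu c^4 a\,\zeta_{xxt}=\varepsilon\mu c^3[e\,\zeta\zeta_{xx}+d\,\zeta_x^2]_x+O(\mu^2),
\]
with $a=B-A$ and $e,d$ the surface-elevation analogues of the combinations introduced after (\ref{eq2}).

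\textbf{Step 2 (reconstruction of the velocity).} The first equation of (\ref{GN3}) reads $\zeta_t+\partial_x(hu)=O_{L^\infty}(\mu^2)$ with $h=1+\varepsilon\zeta-\beta b^{(\alpha)}=c^2+\varepsilon\zeta$. Substituting the expression for $\zeta_t$ from Step 1 and integrating in $x$ determines $hu$: the perfect-derivative terms integrate directly, while the $\frac12 c_x\zeta$ contribution forces the primitive $\int_{-\infty}^{x}\frac{c_x}{c}\zeta$. Dividing by $h$ yields $u$, which is where the prefactor $\frac{c^2}{c^2+\varepsilon\zeta}$ in (\ref{exp u}) comes from; one expands $\frac{c^2}{h}=1-\frac{\varepsilon}{c^2}\zeta+\frac{\varepsilon^2}{c^4}\zeta^2-\cdots$ and checks, using $\varepsilon^4=O(\mu^2)$, that the tail is $O(\mu^2)$. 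Imposing the first equation of (\ref{GN3}) modulo $O_{L^\infty}(\mu^2)$ then fixes every correction coefficient in the ansatz (\ref{exp u}) in terms of $a,e,d$. This step rests on the analogue of Lemma \ref{lemma2}: for $b\in H^\infty(\R)$ and $\zeta$ bounded in $C([0,\frac{T}{\varepsilon}],H^s(\R))$, the map $(t,x)\mapsto\int_{-\infty}^{x}\frac{c_x}{c}\zeta$ is well defined and bounded on $[0,\frac{T}{\varepsilon}]\times\R$, with $\big\vert\int_{-\infty}^{x}\frac{c_x}{c}\zeta\big\vert_{L^\infty}\le\cst\,\sqrt{\alpha}\,\beta\,\vert b_x\vert_2\,\vert\zeta\vert_2$, proved by Cauchy--Schwarz and the substitution $y=\alpha x$ exactly as in Lemma \ref{lemma2} (with $1/c$ bounded since $c^2=1-\beta b^{(\alpha)}$ stays close to $1$).

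\textbf{Step 3 (the coefficients).} Inserting the $u$ of (\ref{exp u}) into the second equation of (\ref{GN3}), differentiating (\ref{exp u}) in $t$ and replacing $\zeta_t$ by its value from Step 1 to compute $u_t,u_{xt},u_{xxt}$, and using $O(\varepsilon^2)=O(\mu)$, $O(\beta\alpha)=O(\mu)$, $O(\beta\alpha^2)=O(\mu^2)$, $O(\beta\alpha\varepsilon)=O(\mu^2)$ and $c^2=1-\beta b^{(\alpha)}$, one reduces the momentum equation (after multiplication by $c$) to an equation for $\zeta$ whose coefficients must agree, modulo $O_{L^\infty}(\mu^2)$, with those of the equation obtained in Step 1. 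Equating them forces $a=B-A=-\frac16$ together with prescribed values of $e$ and $d$; through the definitions of $a,e,d$ this translates, on setting $q:=A$, into $A=q$, $B=q-\frac16$, $E=-\frac32 q-\frac16$, $F=-\frac92 q-\frac{5}{24}$, and the family $(\zeta^\theta,u^\theta)_{\theta\in\wp}$ given by (\ref{exp u}) is then $L^\infty$-consistent on $[0,\frac{T}{\varepsilon}]$ with (\ref{GN3}). The main obstacle is exactly this book-keeping in Steps 2--3 of the $\varepsilon^2$- and $\varepsilon^3$-nonlinearities and of the factor $\frac{c^2}{h}$: one must be sure that no contribution of intermediate order between $\mu$ and $\mu^2$ (in particular of order $\varepsilon^3=O(\mu^{3/2})$) is dropped, and that the primitive $\int_{-\infty}^x\frac{c_x}{c}\zeta$ is retained in the leading part of the expansion rather than absorbed into the remainder, just as $\int_{-\infty}^x c_x u$ is in (\ref{zetaexp}).
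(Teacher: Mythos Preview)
Your proposal is correct and follows essentially the same three-step route as the paper's own proof: Step~1 reduces (\ref{chgeneralzeta}) to the constant-coefficient form (\ref{eq1'}) with $a=B-A$, $e=E+\tfrac32 A$, $d=\tfrac12(F+3A-E)$; Step~2 uses the first GN equation to determine $u=\tfrac{1}{c}(\zeta+\varepsilon v)$ via the formula (\ref{exp of v zeta}) (your ``integrate the continuity equation'' description is just a repackaging of the paper's ansatz, the factor $\tfrac{c^2}{c^2+\varepsilon\zeta}$ arising for the same reason in both); and Step~3 plugs this $u$ into the second GN equation to obtain $a=-\tfrac16$, $e=-\tfrac16$, $d=-\tfrac{1}{48}$, hence the stated values of $A,B,E,F$. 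The only minor imprecision is your phrase ``$e,d$ the surface-elevation analogues'': in fact the paper uses the \emph{same} combinations $e=E+\tfrac32 A$, $d=\tfrac12(F+3A-E)$ as in Proposition~\ref{propvelocity}; it is only the output value of $d$ that differs ($-\tfrac{1}{48}$ here versus $-\tfrac{19}{48}$ there), which is what accounts for the change in $F$.
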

\begin{remark}\label{remaAD1}
 If we take $b=0$ -i.e if we consider a flat bottom-, then 
 one can recover the equation (18) of \cite{AD:08}.
\end{remark}
\begin{remark}\label{remabw}
 Choosing $q=\frac{1}{12}$, $\alpha=\varepsilon$ and $\beta=\mu^{3/2}$ the equation (\ref{chgeneralzeta}) reads
after neglecting the $O(\mu^2)$ terms: 
\begin{eqnarray}
 \lefteqn{\zeta_t+c\zeta_x+\frac{1}{2}c_x\zeta+\frac{3}{2} \varepsilon\zeta\zeta_x
-\frac{3}{8}\varepsilon^2\zeta^2\zeta_x+
\frac{3}{16}\varepsilon^3\zeta^3\zeta_x}\nonumber\\&&
+\frac{\mu}{12}( \zeta_{xxx}-\zeta_{xxt})=-\frac{7}{24}
\varepsilon\mu(\zeta\zeta_{xxx}+2\zeta_{x}\zeta_{xx}),\label{chgbw}
\end{eqnarray}
it is more advantageous to use this equation (\ref{chgbw})  to study the pattern of  wave-breaking 
for the  variable bottom CH equation  (see  \S 3.2 below).
\end{remark}
\begin{proof}
As in Proposition  \ref{propvelocity}, we make the proof in 3 steps (we just sketch the proof here since it is similar to the proof of Proposition 
\ref{propvelocity}). \\
$\textbf{Step 1.}$ We  prove that if $\zeta$ solves (\ref{chgeneralzeta}) one gets 
\begin{eqnarray}
  &\zeta_t +c\zeta_x+\frac{1}{2}c_xu+\frac{3}{2c} \varepsilon \zeta\zeta_x
-\frac{3}{8c^3}\varepsilon^2\zeta^2\zeta_x
+\frac{3}{16c^5}\varepsilon^3\zeta^3\zeta_x
 +\mu c^4a\zeta_{xxt} \label{eq1'}\\
&\qquad\qquad=\varepsilon\mu c^3[e\zeta\zeta_{xx}+d\zeta_x^2]_x
+O(\mu^{2})\nonumber
\end{eqnarray}
with $a=B-A$, $e=E+\frac{3}{2}A$, $d=\frac{1}{2}(F+3A-E)$.\\
$\textbf{Step 2.}$ We seek $v$ such that if $u:=\displaystyle\frac{1}{c}(\zeta+\varepsilon v)$ and $\zeta$ solves (\ref{chgeneralzeta}) then the first equation of (\ref{GN3}) is satisfied up to a
 $O_{L^\infty}(\mu^{2})$ term. Proceeding as in the proof of Proposition \ref{propvelocity},
one can check that a good choice for $v$ is
\begin{eqnarray}
&&\Big(\frac{c^2+\varepsilon\zeta}{c^2}\Big)\varepsilon v=-\frac{1}{2}\int_{-\infty}^{x}\frac{c_x}{c}\zeta
-\frac{\varepsilon}{4c^2}\zeta^2-\frac{\varepsilon^2}{8c^4}\zeta^3+\frac{3\varepsilon^3}{64c^6}\zeta^4
\label{exp of v zeta}\\&&
\qquad\qquad+\mu c^3a\zeta_{xt}-\varepsilon\mu c^2[e\zeta\zeta_{xx}+
d\zeta_x^2].\nonumber
\end{eqnarray}
$\textbf{Step 3.}$ We show here that it is possible to choose the coefficients $A$, $B$, $E$, $F$ such that the second equation of (\ref{GN3}) is also satisfied up to $O_{L^{\infty}}(\mu^{2})$ terms.
Replacing $u$ by $\displaystyle\frac{1}{c}(\zeta+\varepsilon v)$ with $v$ given by (\ref{exp of v zeta}), one can check that
that this condition is equivalent to
\begin{eqnarray*}
 &&\zeta_t+c\zeta_x+\frac{1}{2}c_x\zeta +\frac{3}{2c}\varepsilon\zeta\zeta_x
-\frac{3}{8c^3}\varepsilon^2\zeta^2\zeta_x
+\frac{3}{16c^5}\varepsilon^3\zeta^3\zeta_x-\mu c^4(a+\frac{1}{3})\zeta_{xxt}\\
&=&\varepsilon\mu c^3[-(e+a-\frac{1}{6})\zeta\zeta_{xx}-(\frac{7}{4}a+d+\frac{1}{3})\zeta_x^2]_x+O_{L^\infty}(\mu^{2}). 
\end{eqnarray*}
Equating the coefficients of this equation with those of (\ref{eq1'}) shows that the second equation of (\ref{GN3})
is also satisfied at order $O_{L^\infty}(\mu^2)$ if the following relations hold:
\begin{equation*}
 a=-\frac{1}{6},\quad e=-\frac{1}{6}, \quad d=-\frac{1}{48},
\end{equation*}
and the conditions given in the statement of the proposition on $A$, $B$, $E$, and $F$ follows from the expressions of $a$, $e$ and $d$  given after (\ref{eq1'}).
\end{proof}
\subsection{Derivation of the KdV equation in the long-wave scaling.} \label{subsectdeuxtrois}
 In this subsection, attention is given to the regime of slow variations of the bottom topography under the long-wave scaling $\varepsilon=O(\mu)$. We give here a rigorous justification in the meaning of consistency of the variable-depth extensions of the KdV equation (called KdV-top) originally derived in \cite{kirby,sevendsen,Dingemans}. We consider the values of $\varepsilon$, $\beta$, $\alpha$ and $\mu$ satisfying:
\begin{equation}
 \varepsilon=O(\mu),\quad \alpha\beta=O(\varepsilon),\quad \alpha^2\beta=O(\varepsilon^2).
\label{cond2}
\end{equation}
\begin{remark}
\label{remarkwp}
Any family of parameters $\theta=(\varepsilon,\beta,\alpha,\mu)$ satisfying (\ref{cond2}),  also  satisfies 
(\ref{cond1}) .
\end{remark}
 Neglecting the $O(\mu^2)$ terms, one obtains from (\ref{GN3}) the following Boussinesq system: 
\begin{equation}
 \left\{
 \begin{array}{lc}
 \dsp \zeta_t+[h u]_x=0\vspace{1mm}\\
 \dsp u_t+\zeta_x+\varepsilon uu_x=\frac{\mu}{3}c^4u_{xxt},
 \end{array}
 \right.
 \label{BOUSS}
 \end{equation}
where we recall that $h=1+\varepsilon\zeta-\beta b^{(\alpha)}$ and $c^2=1-\beta b^{(\alpha)}$.
The next proposition proves that the KdV-top equation 
\begin{equation}
 \zeta_t + c\zeta_x +\frac{3}{2c} \varepsilon \zeta\zeta_x + 
\frac{1}{6}\mu c^5 \zeta_{xxx} +\frac{1}{2}c_x\zeta=0,
\label{kdv-top}
\end{equation}
 is $L^\infty$-consistent  with the equations (\ref{BOUSS}).
\begin{proposition}\label{propbouss}
 Let $b\in H^{\infty}(\R)$.
 Then:
\begin{itemize}
\item For all family $\wp'$ of parameters satisfying (\ref{cond2}),
 \item For all $s\ge 0$ large enough and $T>0$, 
\item For all bounded family $(\zeta^{\theta})_{\theta\in \wp'} \in C([0,\frac{T}{\varepsilon}],H^{s}(\R))$ solving (\ref{kdv-top})
\end{itemize}
the familly $(\zeta^{\theta},u^{\theta})_{\theta\in \wp'}$ with (omitting the index $\theta$)\\
$$u:= \frac{1}{c}\Big(\zeta-\frac{1}{2}\int_{-\infty}^{x}\frac{c_x}{c}\zeta-\frac{\varepsilon}{4c^2}\zeta^2
	+\mu\frac{1}{6}c^4\zeta_{xx}\Big)$$
is $L^\infty$-consistent  on $[0,\frac{T}{\varepsilon}]$ with the equations (\ref{BOUSS}).
\end{proposition}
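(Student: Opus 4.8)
My plan is to repeat, in the simpler long-wave scaling (\ref{cond2}), the three-step argument of Propositions \ref{propvelocity} and \ref{propzeta}. By Remark \ref{remarkwp} the parameters $\theta$ satisfy (\ref{cond1}); moreover $\varepsilon=O(\mu)$ gives $\varepsilon^2=O(\mu^2)$ and $\varepsilon\mu=O(\mu^2)$, so every cubic (and higher) nonlinearity in $\zeta$ and every nonlinear-dispersive term of type $\varepsilon\mu\,\zeta\zeta_{xxx}$ or $\varepsilon\mu\,\zeta_x\zeta_{xx}$ is already an $O(\mu^2)$ remainder. Only the transport $c\zeta_x$, the linear dispersion $\mu c^5\zeta_{xxx}$, the quadratic nonlinearity $\varepsilon\zeta\zeta_x$, and the leading topographic terms $c_x\zeta$ and $\int_{-\infty}^x\frac{c_x}{c}\zeta$ (entering through the profile for $u$) survive; this is why the plain equation (\ref{kdv-top}) is enough and why $u$ only needs corrections at orders $\varepsilon$, $\mu$ and $\alpha\beta$. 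One could try to deduce the statement from Proposition \ref{propzeta} itself, but there $\zeta$ must solve (\ref{chgeneralzeta}) exactly whereas here it solves (\ref{kdv-top}) exactly, so it is cleaner to redo the now much shorter computation directly.

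First I would record the basic facts on $c=\sqrt{1-\beta b^{(\alpha)}}$: it is bounded away from $0$ on $\R$, and since $c_x=-\frac{\beta\alpha}{2c}b^{(\alpha)}_x$ every $x$-derivative falling on $c$ produces a factor $\beta\alpha=O(\mu)$ in every Sobolev norm; furthermore, exactly as in Lemma \ref{lemma2}, a Cauchy--Schwarz estimate ($|c_x/c|_2\leq\cst\,\beta\sqrt{\alpha}\,|b_x|_2$) shows that $x\mapsto\int_{-\infty}^x\frac{c_x}{c}\zeta\,dx'$ is well defined on $[0,\frac{T}{\varepsilon}]\times\R$ and bounded there. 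Next, from (\ref{kdv-top}) together with $c_x\zeta=O(\mu)$ one gets the leading-order relation $\zeta_t=-c\zeta_x+O(\mu)$, and, after differentiating in $x$ and again discarding derivatives of $c$, $\zeta_{xt}=-c\zeta_{xx}+O(\mu)$ and $\zeta_{xxt}=-c\zeta_{xxx}+O(\mu)$; these convert the time derivatives in the two residuals into $x$-derivatives.

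Then I would perform the substitution. Writing $u=\frac1c(\zeta+\varepsilon v)$ with $\varepsilon v=-\frac12\int_{-\infty}^x\frac{c_x}{c}\zeta-\frac{\varepsilon}{4c^2}\zeta^2+\frac{\mu}{6}c^4\zeta_{xx}$, I plug $u$ into the second equation of (\ref{BOUSS}) and sort by powers of $\varepsilon$ and $\mu$: the $O(1)$ part cancels since $\frac1c\zeta_t+\zeta_x=O(\mu)$; the $O(\varepsilon)$ part cancels because the contributions of $\frac1c\zeta_t$, $\frac1c\varepsilon v_t$ and $\varepsilon uu_x$ to the coefficient of $\zeta\zeta_x$ add to $\frac{\varepsilon}{c^2}(-\frac32+\frac12+1)=0$, which is what fixes the coefficient $-\frac14$ of $\zeta^2$ in $v$; and the $O(\mu)$ part cancels because $-\frac{\mu}{6}c^4\zeta_{xxx}$ (from $\frac1c\zeta_t$), $-\frac{\mu}{6}c^4\zeta_{xxx}$ (from the $\frac{\mu}{6}c^3\zeta_{xxt}$ in $u_t$) and $+\frac{\mu}{3}c^4\zeta_{xxx}$ (from $-\frac{\mu}{3}c^4u_{xxt}$) add to $0$, which fixes the coefficient $\frac16$ of $\zeta_{xx}$. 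For the first equation I would use $h=c^2+\varepsilon\zeta$, expand $[hu]_x=[c^2u]_x+\varepsilon[\zeta u]_x$, drop every term carrying an extra factor $\varepsilon\mu$, $\varepsilon^2$ or $\beta\alpha$ against a $\mu$ or $\varepsilon$, and check that the survivor is exactly $c\zeta_x+\frac12 c_x\zeta+\frac{3\varepsilon}{2c}\zeta\zeta_x+\frac{\mu}{6}c^5\zeta_{xxx}$: the key point here is that $\partial_x$ hitting the antiderivative inside $c^2u$ produces $-\frac12 c_x\zeta$, which together with the $c_x\zeta$ coming from $\partial_x(c\zeta)$ leaves $\frac12 c_x\zeta$, precisely matched by the $\frac12 c_x\zeta$ of (\ref{kdv-top}) (this is what pins down the coefficient $-\frac12$ of the antiderivative in $u$). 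Hence $\zeta_t+[hu]_x=O(\mu^2)$ because $\zeta$ solves (\ref{kdv-top}), so both equations of (\ref{BOUSS}) hold with $L^\infty$ remainders of order $\mu^2$, in the sense of Definition \ref{defi}.

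The one genuinely delicate point, exactly as in the proofs of Propositions \ref{propvelocity} and \ref{propzeta}, is the order-bookkeeping of the purely topographic contributions under (\ref{cond2}): one must check term by term that every occurrence of $\alpha$, $\beta$, and of the antiderivative $\int_{-\infty}^x\frac{c_x}{c}\zeta$ (bounded in $L^\infty$ by the Cauchy--Schwarz estimate recorded above) carries enough smallness to be absorbed into the $O(\mu^2)$ remainder. Everything else is the routine substitution sketched above, which is much shorter here than in Propositions \ref{propvelocity}--\ref{propzeta} because of the extra relation $\varepsilon=O(\mu)$.
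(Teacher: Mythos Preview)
Your direct three-step computation is correct and would establish the proposition, but it is \emph{not} the route the paper takes. The paper's proof is a three-line specialization of Proposition~\ref{propzeta}: choose $q=\tfrac16$ so that $B=0$, observe that under the long-wave scaling (\ref{cond2}) every extra term in (\ref{chgeneralzeta}) relative to (\ref{kdv-top}) is of order $\varepsilon^2$, $\varepsilon^3$ or $\varepsilon\mu$ and hence $O(\mu^2)$, and similarly reduce the formula (\ref{exp u}) for $u$ (replacing $-\tfrac{\mu}{6}c^3\zeta_{xt}$ by $+\tfrac{\mu}{6}c^4\zeta_{xx}$ via $\zeta_{xt}=-c\zeta_{xx}+O(\mu)$) and the system (\ref{GN3}) to (\ref{BOUSS}). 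You explicitly considered this route and set it aside on the grounds that Proposition~\ref{propzeta} assumes $\zeta$ solves (\ref{chgeneralzeta}) \emph{exactly}; that objection is fair, and the paper does not spell out why the conclusion of Proposition~\ref{propzeta} survives when the equation is satisfied only up to $O(\mu^2)$. In that sense your argument is more self-contained. On the other hand, the paper's shortcut is legitimate once one notices that the proof of Proposition~\ref{propzeta} only ever uses (\ref{chgeneralzeta}) through identities of the form $\zeta_t=-c\zeta_x+O(\mu)$ and the exact equation modulo $O(\mu^2)$, both of which hold equally well for solutions of (\ref{kdv-top}) under (\ref{cond2}). So the two approaches buy different things: yours avoids any appeal to the more intricate Proposition~\ref{propzeta} and makes the coefficient determinations ($-\tfrac12$, $-\tfrac14$, $\tfrac16$) transparent; the paper's avoids repeating any computation at all.
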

\begin{remark}\label{remarkV}
Similarly, one can prove that a family $(\zeta^\theta,u^\theta)_{\theta\in\wp'}$
with $u^\theta$ solution of the  KDV-top equation
\begin{equation}\label{kdv-topV}
 u_t + cu_x +\frac{3}{2}\varepsilon uu_x + 
\frac{1}{6}\mu c^5 u_{xxx} +\frac{3}{2}c_xu=0,
\end{equation}
and $\zeta^\theta$ given by
\begin{equation}\label{zetaexpkdv}
 \zeta := cu+\frac{1}{2}\int_{-\infty}^{x}c_xu+\frac{\varepsilon}{4}u^{2}-\frac{\mu}{6}c^5u_{xx},
\end{equation}
 is $L^\infty$-consistent  with the equations (\ref{BOUSS}).
\end{remark}
\begin{proof}
 We saw in the previous subsection that if $(\zeta^{\theta})_{\theta\in\wp}$ is a family of solutions of (\ref{chgeneralzeta}), then the family $(\zeta^{\theta},u^{\theta})_{\theta\in\wp}$ with $u^{\theta}$
is given by (\ref{exp u}) is $L^\infty$-consistent with the equations (\ref{GN3}).
Since $\theta=(\varepsilon,\beta,\alpha,\mu)\in\wp'\subset\wp$ then
by taking $q=\frac{1}{6}$, we remark that the equations (\ref{chgeneralzeta}) and (\ref{kdv-top})
are equivalent in the meaning of $L^\infty$-consistency
and the systems (\ref{GN3}) and (\ref{BOUSS})
are also, so it is clear that $(\zeta^{\theta},u^{\theta})_{\theta\in\wp'}$
is $L^\infty$-consistent  with the equations (\ref{BOUSS}).
\end{proof}
\section{Mathematical analysis of the variable bottom models} \label{sectrois}
\subsection{Well-posedness for the variable bottom CH equation} \label{subsectroisun}
We prove here the well posedness of the general class of equations 
\begin{eqnarray}
\label{wpchgeneral}
&\dsp (1-\mu m\partial_x^2)u_t+cu_x+kc_xu+\sum_{j\in J}\varepsilon^jf_j u^j u_x+\mu gu_{xxx}\\
&\dsp \qquad\qquad\qquad=\varepsilon\mu\Big[h_1u u_{xxx}+\partial_x(h_2u) u_{xx}+ u_x\partial_x^2(h_2u)\Big],\nonumber
\end{eqnarray}
where $m>0$ ,  $k\in\R$ , $J$ is a finite subset of $\N^*$ and $f_j=f_j(c)$, $g=g(c)$, $h_1=h_1(c)$ and $h_2=h_2(c)$ are smooth functions of $c$.
We also recall  that $c=\sqrt{1-\beta b^{(\alpha)}}$.
\begin{example}
Taking
\begin{eqnarray*}
 &&m=-B,\qquad k=\frac{3}{2},\qquad
J=\{1\},\qquad f_1(c)=\frac{3}{2},\\
&&g(c)=Ac^5-Bc^5+Bc, \qquad h_1(c)=c^4E-\frac{3}{2}Bc^4+\frac{3}{2}B,\\
&&h_2(c)=\frac{Fc^4-\frac{9}{2}Bc^4+\frac{9}{2}B}{2},
\end{eqnarray*}
the equation (\ref{wpchgeneral}) coincides with (\ref{chgeneral}).
\end{example}
\begin{example} Taking
\begin{eqnarray*}
&&m=-B,\qquad
k=\frac{1}{2},\qquad J=\{1,2,3\},\\
&&f_1(c)=\frac{3}{2c},\qquad
f_2(c)=-\frac{3}{8c^3},\qquad
f_3(c)=\frac{3}{16c^5}, \\ 
&&g(c)=Ac^5-Bc^5+Bc,\qquad
h_1 (c)=c^3E-\frac{3}{2}Bc^3+\frac{3}{2c}B,\\
&&h_2(c)=\frac{Fc^3-\frac{9}{2}Bc^3+\frac{9}{2c}B}{2},
\end{eqnarray*}
the equation (\ref{wpchgeneral}) coincides with (\ref{chgeneralzeta}). 
\end{example}

More precisely,  Theorem \ref{th1} below shows that one can solve the initial
value problem
\begin{equation}\label{chgeneralsys}
	\left\vert
	\begin{array}{l}
	\dsp (1-\mu m\partial_x^2)u_t+cu_x+kc_xu+\sum_{j\in J}\varepsilon^jf_j u^j u_x+\mu gu_{xxx}
        \\
        \dsp \hskip 1.5cm =\varepsilon\mu\Big[h_1u u_{xxx}+\partial_x(h_2u) u_{xx}+ u_x\partial_x^2(h_2u)\Big],\\
	\dsp u_{\vert_{t=0}}=u^0
	\end{array}\right.
\end{equation}
on a time scale
$O(1/\varepsilon)$, and under the condition $m>0$. In order to state the
result, we need to define the energy space $X^s$ ($s\in\R$) as
$$
	X^{s+1}(\R)=H^{s+1}(\R)\mbox{ endowed with the norm }\vert f\vert_{X^{s+1}}^2
	=\vert f\vert_{H^s}^2+\mu m\vert \partial_x f\vert_{H^s}^2.
$$
\begin{theorem}\label{th1}
	Let $m>0$, $s>\frac{3}{2}$ and $b\in H^{\infty}(\R)$. Let also $\wp$  be a  family of parameters   $\theta=(\varepsilon,\beta,\alpha,\mu)$
	 satisfying (\ref{cond1}).
        Then for all $u^0 \in H^{s+1}(\R)$, there exists $T>0$ and a unique family of solutions 
	$(u^{\theta})_{\theta\in{\wp}}$ to (\ref{chgeneralsys})
	bounded in $C([0,\frac{T}{\varepsilon}];X^{s+1}(\R))\cap
	C^1([0,\frac{T}{\varepsilon}];X^{s}(\R))$.
\end{theorem}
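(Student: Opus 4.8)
The plan is to run the classical energy method for Camassa--Holm-type equations, exactly as in \cite{AD:08} for the flat case, while keeping careful track of the dependence on $\theta=(\varepsilon,\beta,\alpha,\mu)$ so that the existence time comes out to be $O(1/\varepsilon)$, uniformly on $\wp$. First I would reformulate (\ref{chgeneralsys}): writing $\Lambda=(1-\mu m\partial_x^2)$, which is elliptic since $m>0$, the problem becomes $u_t=\Lambda^{-1}\Phi[u]$, $u_{|t=0}=u^0$, where $\Phi[u]$ collects all the other terms. The decisive elementary fact is that the Fourier multipliers $\Lambda^{-1}$, $\sqrt{\mu}\,\partial_x\Lambda^{-1/2}$ and $\mu\partial_x^2\Lambda^{-1}$ are bounded on every $H^r(\R)$ with norms depending only on $m$, uniformly in $\mu$. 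Hence the third-order terms $\mu g u_{xxx}$ and $\varepsilon\mu h_1 u u_{xxx}$, once premultiplied by $\Lambda^{-1}$, become at most first-order operators acting on $u$ (one derivative is absorbed by the bounded operator $\mu\partial_x^2\Lambda^{-1}$), the nonlinear ones with an extra $\varepsilon$. This is precisely why the energy space must be $X^{s+1}$: the norm $|f|_{X^{s+1}}^2=(\Lambda f,f)_{H^s}$ is equivalent to $|f|_{H^s}^2$ for fixed $\mu$, but all estimates below stay uniform as $\mu\to0$. I also record that $c=\sqrt{1-\beta b^{(\alpha)}}$ and $1/c$ are smooth and bounded (the non-cavitation condition $1-\beta b^{(\alpha)}\geq c_0^2>0$ being implicit), with $|\partial_x^j c|_{L^\infty}=O(\varepsilon)$ for $j\geq1$ by (\ref{cond1}), and likewise for $f_j(c),g(c),h_1(c),h_2(c)$.

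Next I would establish the a priori estimate. For a smooth solution put $E^s(u)^2=|u|_{X^{s+1}}^2$; applying $\langle D\rangle^s$ to the equation and pairing with $\langle D\rangle^s u$ in $L^2$, and using $(\Lambda\partial_t\langle D\rangle^s u,\langle D\rangle^s u)=\tfrac12\tfrac{d}{dt}E^s(u)^2$, one gets
$$\tfrac12\tfrac{d}{dt}E^s(u)^2=-(\langle D\rangle^s(cu_x),\langle D\rangle^s u)-k(\langle D\rangle^s(c_xu),\langle D\rangle^s u)-\cdots .$$
The transport term is the only one not carrying an explicit small factor: splitting $(\langle D\rangle^s(cu_x),\langle D\rangle^s u)=(c\,\partial_x\langle D\rangle^s u,\langle D\rangle^s u)+([\langle D\rangle^s,c]\partial_x u,\langle D\rangle^s u)$, the first piece integrates by parts into $-\tfrac12(c_x\langle D\rangle^s u,\langle D\rangle^s u)=O(\varepsilon)E^s(u)^2$, while the commutator is controlled by a Kato--Ponce estimate by $|u|_{H^s}^2$ times a quantity involving only $\partial_x c$, which is $O(\varepsilon)$ by (\ref{cond1}); and $kc_xu$ is directly $O(\varepsilon)$. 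All remaining contributions carry a factor $\varepsilon$ or $\mu=O(\varepsilon)$ and are handled by standard tame product, commutator and Moser-type inequalities together with $H^s\hookrightarrow W^{1,\infty}$ ($s>\tfrac32$), the third-order terms being tamed by the uniform bounds of the first step. One arrives at $\tfrac{d}{dt}E^s(u)^2\leq\varepsilon\,C(\tfrac1{h_0},|b|_{H^N},E^s(u))E^s(u)^2$, so a continuity/bootstrap argument yields a $T>0$, depending only on $E^s(u^0)$, $h_0$, $|b|_{H^N}$, with $E^s(u(t))$ bounded on $[0,T/\varepsilon]$ uniformly in $\theta\in\wp$.

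Existence would then follow by regularization: replace $\Phi$ by $J_\delta\Phi[J_\delta\,\cdot\,]$, $J_\delta$ a Friedrichs mollifier (or add $-\delta\partial_x^4$), so the regularized problem is an ODE in $X^{s+1}$ with Lipschitz right-hand side and has a maximal solution $u^\delta$ by Cauchy--Lipschitz; the energy estimate above applies essentially unchanged ($J_\delta$ being self-adjoint and uniformly bounded) and gives a $\delta$-independent bound of $u^\delta$ in $C([0,T/\varepsilon];X^{s+1})$. Passing to the limit $\delta\to0$ (weak-$*$ compactness in $X^{s+1}$, strong convergence in $C([0,T/\varepsilon];X^{s'}_{\mathrm{loc}})$, $s'<s+1$, via Aubin--Lions using $\partial_t u^\delta$ bounded in $X^s$) produces a solution in $L^\infty([0,T/\varepsilon];X^{s+1})$, and then in $C^1([0,T/\varepsilon];X^s)$ directly from the equation. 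Uniqueness comes from an $X^1$ (or $L^2$) energy estimate on the difference of two solutions plus Gronwall, and the upgrade $L^\infty\to C([0,T/\varepsilon];X^{s+1})$ together with continuous dependence on $u^0\in H^{s+1}$ is obtained by the Bona--Smith technique (smooth out $u^0$, combine continuity in the low norm with the uniform high-norm bound and weak continuity).

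The main obstacle is the a priori estimate: one must check that every term except the transport part genuinely produces a factor $\varepsilon$ (or $\mu$) — in particular the variable-coefficient commutators $[\langle D\rangle^s,c]$ and the Moser estimates for $f_j(c),g(c),h_1(c),h_2(c)$ — which is exactly where the full strength of the scaling relations (\ref{cond1}) is needed, and simultaneously that no inverse power of $\mu$ is created when the three dispersive derivatives are redistributed through $\Lambda^{-1}$ and the $X^{s+1}$ norm, so that all constants stay uniform as $\mu\to0$.
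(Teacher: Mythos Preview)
Your overall architecture (energy estimate in $X^{s+1}$, then a standard existence scheme, then uniqueness and continuity) is the same as the paper's, and most of your observations are correct: the $X^{s+1}$-norm is precisely adapted so that the $\mu\partial_x^2$ contained in $1-\mu m\partial_x^2$ absorbs the extra derivatives coming from the dispersive terms, uniformly in $\mu$. However, there is one genuine gap in your a priori estimate, and one incorrect claim.

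\medskip
\textbf{The commutator $[\langle D\rangle^s,c]\partial_x u$ is not $O(\varepsilon)$ via Kato--Ponce.} You write that this commutator is controlled ``by a Kato--Ponce estimate by $\vert u\vert_{H^s}^2$ times a quantity involving only $\partial_x c$, which is $O(\varepsilon)$''. The standard Kato--Ponce commutator estimate reads
\[
\big\vert [\langle D\rangle^s,c]\partial_x u\big\vert_{L^2}\le C\big(\vert c_x\vert_{L^\infty}\vert u\vert_{H^s}+\vert c_x\vert_{H^{s-1}}\vert u_x\vert_{L^\infty}\big),
\]
and only the $L^\infty$ norm of $c_x$ is $O(\alpha\beta)=O(\varepsilon)$. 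The Sobolev norm scales differently: since $c_x(x)\sim\alpha\beta\,b'(\alpha x)$, one has $\vert c_x\vert_{L^2}\sim\alpha\beta\cdot\alpha^{-1/2}\vert b'\vert_{L^2}=\sqrt{\alpha}\,\beta$, hence $\vert c_x\vert_{H^{s-1}}=O(\sqrt{\alpha}\,\beta)$, which is \emph{not} $O(\varepsilon)$ under (\ref{cond1}). For instance $\alpha=\varepsilon$, $\beta=1$, $\mu=\varepsilon$ satisfies (\ref{cond1}) but gives $\sqrt{\alpha}\,\beta=\sqrt{\varepsilon}$. The paper singles out exactly this point and cures it by subtracting the first-order Poisson bracket $\{\langle D\rangle^s,c\}=-s\,c_x\langle D\rangle^{s-2}\partial_x$ from the commutator: the difference is then an operator of order $s-2$ whose norm is controlled by $\vert c_{xx}\vert_{H^s}=O(\alpha\beta)$ (using a refined estimate from \cite{lannes'}), while the Poisson bracket itself is controlled by $\vert c_x\vert_{L^\infty}=O(\alpha\beta)$. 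The same difficulty and the same fix recur for the commutators $[\langle D\rangle^s,g]$ and $[\langle D\rangle^s,h_1v]$ arising from the dispersive terms. Without this refinement your energy inequality only gives $\tfrac{d}{dt}E^s(u)^2\le C\sqrt{\alpha}\,\beta\,E^s(u)^2$, and the existence time is not $O(1/\varepsilon)$ uniformly on $\wp$.

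\medskip
\textbf{The claim $\mu=O(\varepsilon)$ is wrong and unnecessary.} Condition (\ref{cond1}) only says $\varepsilon=O(\sqrt{\mu})$, i.e.\ $\mu\ge c\,\varepsilon^2$; it does not give $\mu\le C\varepsilon$ (take $\varepsilon=\mu^2$). The correct mechanism for the term $\mu g u_{xxx}$ is the one you allude to but do not use: if $g$ were constant this term would be skew-adjoint, so after the commutator/integration-by-parts manipulations only quantities like $\mu\,g_x\,\vert\partial_x u\vert_{H^s}^2$ survive; here $\mu\vert\partial_x u\vert_{H^s}^2\le m^{-1}\vert u\vert_{X^{s+1}}^2$ and $\vert g_x\vert_{L^\infty}=O(\alpha\beta)=O(\varepsilon)$. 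So the $\varepsilon$ comes from the \emph{variable coefficients}, not from $\mu$ itself.

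\medskip
Aside from these two points, your scheme is sound; the paper implements existence via an iterative linearization ${\mathcal L}(u^n,\partial)u^{n+1}=0$ rather than Friedrichs mollification, but that is a matter of taste.
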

\begin{proof}
In this proof, we use the generic notation
$$
C=C(\varepsilon,\mu,\alpha,\beta,s,\vert b\vert_{H^\sigma})
$$
for some $\sigma>s+1/2$ large enough. When the constant also depends on $\vert v\vert_{X^{s+1}}$,
we write $C(\vert v\vert_{X^{s+1}})$.
Note that the dependence on the parameters is assumed to be nondecreasing. 

For all $v$ smooth enough, let us define the ``linearized'' operator
${\mathcal L}(v,\partial)$ as
\begin{eqnarray*}
	{\mathcal L}(v,\partial)&=&(1-\mu m\partial_x^2)\partial_t
	+c\partial_x+kc_x+\varepsilon^j f_j v^j\partial_x+\mu g\partial_x^3
        \\
	&&-\varepsilon\mu\big[ h_1 v\partial_x^3+ (h_2v)_x\partial_x^2+(h_2v)_{xx}\partial_x\big];
\end{eqnarray*}
for the sake of simplicity we use the convention of summation over repated indexes,  $\sum_{j\in J}\varepsilon^jf_j v^j=\varepsilon^j f_j v^j$.
To build a solution of (\ref{wpchgeneral}) using an iterative scheme, 
we have to study the problem initial value
\begin{equation}\label{linsys}
	\left\lbrace
	\begin{array}{l}
	{\mathcal L}(v,\partial)u=\varepsilon f,\\
	u_{\vert_{t=0}}=u^0.
	\end{array}\right.
\end{equation}
If $v$ is smooth enough, 
it is completely standard to check that for all $s\geq 0$,
$f \in L^1_{loc}(\R^+_t;H^s(\R_x))$ and $u^0\in H^s(\R)$, there
exists a unique solution $u\in C(\R^+;H^{s+1}(\R))$ to (\ref{linsys})
(recall that $m>0$). We thus  take for granted the existence of a
solution to (\ref{linsys}) and establish some precise energy estimates
on the solution. These energy estimates are given in terms of the $\vert \cdot \vert_{X^{s+1}}$  norm introduced above:
$$
	\vert u\vert_{X^{s+1}}^2=\vert u\vert_{H^s}^2+\mu m\vert  \partial_x u\vert_{H^s}^2.
$$
Differentiating $\frac{1}{2}e^{-\varepsilon\lambda t}\vert u\vert_{X^{s+1}}^2$
with respect to time, one gets using the equation (\ref{linsys}) and integrating by parts,
\begin{eqnarray*}
	\frac{1}{2}e^{\varepsilon\lambda t}\partial_t (e^{-\varepsilon\lambda t}\vert u\vert_{X^{s+1}}^2)
	&=&-\frac{\varepsilon\lambda}{2} \vert u\vert_{X^{s+1}}^2
        -(\Lambda^s (c\partial_x u),\Lambda^su)-k(\Lambda^s (u\partial_xc ),\Lambda^s u)\\
	&&+ \varepsilon (\Lambda^s f,\Lambda^su)
	-\varepsilon^j (\Lambda^s(f_jv^j\partial_x u),\Lambda^su)-\mu (\Lambda^s(g\partial_x^3 u),\Lambda^s u)\\
	& &-\varepsilon\mu (\Lambda^s(h_1v\partial_x^3 u),\Lambda^s u)
	-\varepsilon\mu 
	(\Lambda^s((h_2v)_x\partial_x u),\Lambda^s \partial_x u),
\end{eqnarray*}
where $\Lambda=(1-\partial_x^2)^{1/2}$. 
Since for all constant coefficient skewsymmetric differential 
polynomial $P$ (that is, $P^*=-P$), and all $h$ smooth enough, one has
$$
	(\Lambda^s (h P u),\Lambda^s u)=
	([\Lambda^s,h]Pu,\Lambda^s u)
	-\frac{1}{2}([P,h]\Lambda ^su,\Lambda^s u),
$$
we deduce (applying this identity with $P=\partial_x$ and $P=\partial_x^3$),
\begin{eqnarray}
	\lefteqn{\frac{1}{2}e^{\varepsilon\lambda t}\partial_t(e^{-\varepsilon\lambda t} \vert u\vert_{X^{s+1}}^2)
	=-\frac{\varepsilon\lambda}{2} \vert u\vert_{X^{s+1}}^2+
        \varepsilon (\Lambda^s f,\Lambda^su)}\nonumber\\
      && -\big([\Lambda^s,c]\partial_xu,\Lambda^s u\big)+
         \frac{1}{2}((\partial_xc) \Lambda^su, \Lambda^s u\big)
        -k(\Lambda^s (u\partial_xc ),\Lambda^su)\nonumber\\
        &&-\varepsilon^j \big([\Lambda^s,f_jv^j]\partial_x u,\Lambda^s u\big)
	+\frac{\varepsilon^j}{2}(\partial_x(f_jv^j)\Lambda^s u,\Lambda^s u\big)\nonumber\\
	& &-\mu \big([\Lambda^s,g]\partial_x^2 u-\frac{3}{2}g_x\Lambda^s\partial_x u-g_{xx}\Lambda^s u,\Lambda^s \partial_x u\big)-\mu\big([\Lambda^s,g_x]\partial_x^2 u,\Lambda^s u\big)\nonumber\\&&
       -\varepsilon\mu \big([\Lambda^s,h_1v]\partial_x^2 u-\frac{3}{2}(h_1v)_x\Lambda^s\partial_x u-(h_1v)_{xx}\Lambda^s u,\Lambda^s \partial_x u\big)\nonumber\\
	&&-\varepsilon\mu \big([\Lambda^s,(h_1v)_x]\partial_x^2 u,\Lambda^s u\big)
	-\varepsilon\mu \big(\Lambda^s((h_2v)_x\partial_x u),\Lambda^s \partial_x u\big).
\label{existenceCH}
\end{eqnarray}
Note that we  also used the identities
$$
[\Lambda^s, h]\partial_x^3 u=\partial_x\big([\Lambda^s,h]\partial_x^2 u\big)-[\Lambda^s,h_x]\partial_x^2 u
$$
and 
$$
	\frac{1}{2}(h_{xxx}\Lambda^s u,\Lambda^s u)=-(h_{xx}\Lambda^s u,\Lambda^s u_x).
$$
The terms involving the velocity $c$ (second line in the r.h.s of (\ref{existenceCH})) are controled using the following lemma:
\begin{lemma}\label{lemma3}
 Let $s>3/2$  and $(\varepsilon,\beta,\alpha,\mu)$ satisfy (\ref{cond1}). Then there exists $C>0$ such that 
\begin{eqnarray}
\vert\big([\Lambda^s,c]\partial_xu,\Lambda^su\big)\vert&\leq& \varepsilon C \vert u\vert_{X^{s+1}}^2\label{estiamte3}\\
\vert \big((\partial_xc) \Lambda^su, \Lambda^s u\big)\vert&\leq& \varepsilon C \vert u\vert_{X^{s+1}}^2
\label{estiamte1}\\
\vert\big(\Lambda^s (u\partial_xc ),\Lambda^su\big)\vert&\leq& \varepsilon C \vert u\vert_{X^{s+1}}^2.
\label{estiamte2}
\end{eqnarray}
\end{lemma}
\begin{proof}
$\bullet$ Estimate of $\vert\big([\Lambda^s,c]\partial_xu,\Lambda^su\big)\vert$.
One could control this term by a standard commutator estimates in terms of $\vert c_x\vert_{H^{s-1}}$;
however, one has $\vert c_x\vert_{H^{s-1}}=O(\sqrt{\alpha}\beta)$ and not $O(\alpha\beta)$ as
needed. We thus  write 
$$
\big([\Lambda^s,c]\partial_xu,\Lambda^su\big)=\big(([\Lambda^s,c]-\{\Lambda^s,c\})\partial_xu,\Lambda^su\big)
+\big(\{\Lambda^s,c\}\partial_xu,\Lambda^su\big),
$$
where for all function $F$, $\{\Lambda^s,F\}$ stands for the Poisson bracket,
$$
	\{\Lambda^s,F\}= -s\partial_x F\Lambda^{s-2}\partial_x.
$$
We can then use the following  commutator estimate (\cite{lannes'}, Theorem 5): for all
$F$ and $U$ smooth enough, one has
$$ \forall s> 3/2,\qquad
 \vert([\Lambda^s,F]-\{\Lambda^s,F\})U\vert_2\leq \cst\; \vert \partial_x^2F\vert_{H^s} \vert U\vert_{H^{s-2}}^2.
$$ 
Since 
$ \vert \partial_x^2c\vert_{H^s}=O(\alpha\beta)=O(\varepsilon)$, we deduce
$$\vert\big(([\Lambda^s,c]-\{\Lambda^s,c\})\partial_xu,\Lambda^su\big)\vert\leq C \beta\alpha \vert u\vert_{X^{s+1}}^2
\leq C \varepsilon \vert u\vert_{X^{s+1}}^2.$$ 
Morever
$$
\vert\big(\{\Lambda^s,c\}\partial_xu,\Lambda^su\big)\vert\leq\vert -s\partial(c^4)\Lambda^{s-2}\partial_x^2u\vert_2\vert u\vert_{X^{s+1}}
\leq C \vert \partial(c^4)\vert_{\infty} \vert u\vert_{X^{s+1}}^2\leq \varepsilon C \vert u\vert_{X^{s+1}}^2,
$$
and (\ref{estiamte3}) thus follows easily.\\
$\bullet$ Estimate of $\vert\big((\partial_xc) \Lambda^su, \Lambda^s u\big)\vert$. Since $\vert u\vert_{H^s}\leq \vert u\vert_{X^{s+1}}$,
one has by Cauchy-Schwarz inequality
$$\big((\partial_xc) \Lambda^su, \Lambda^s u\big)\leq \vert \partial_x c \vert_{\infty}\vert u\vert_{X^{s+1}}^2;$$
therefore, by using the fact that $\vert \partial_xc \vert_\infty\leq\cst \;\alpha\beta\vert \partial_xb \vert_\infty$ and
$\alpha\beta=O(\varepsilon)$, one gets easily 
$$\big((\partial_xc) \Lambda^su, \Lambda^s u\big)\leq \varepsilon C \vert u\vert_{X^{s+1}}^2.$$
$\bullet$ Estimate of $\vert\big(\Lambda^s (u\partial_xc ),\Lambda^su\big)\vert$. By Cauchy-Schwarz we get
$$\vert(\Lambda^s (u\partial_xc ),\Lambda^su)\vert\leq\vert u\partial_xc \vert_{H^s} \vert u\vert_{H^s}.$$
We have also that
$$
\vert u\partial_xc\vert_{H^s} \leq \vert \partial_xc\vert_{W^{[s]+1,\infty}}\vert u\vert_{X^{s+1}}
$$
where $[s]$ is the largest integer smaller or equal to $s$ (this estimate is obvious for
$s$ integer and is obtained by interpolation for non integer values of $s$.) 
Using this estimate, and the fact that 
$
\beta\alpha=O(\varepsilon),
$
it is easy to deduce (\ref{estiamte2}).
\end{proof}
For the terms involving the $f_j$, (third line in the r.h.s of (\ref{existenceCH})) we use the controls given by:
\begin{lemma}\label{lemma3'}
 Under the assumptions of Theorem \ref{th1}, one has that
\begin{eqnarray}
\vert\varepsilon^j\big([\Lambda^s,f_jv^j]\partial_x u,\Lambda^s u\big)\vert&\leq& \varepsilon C( \vert v\vert_{X^{s+1}})\vert u\vert_{X^{s+1}}^2\label{estimate4}\\
\vert \varepsilon^j\big(\partial_x(f_jv^j)\Lambda^s u,\Lambda^s u\big)\vert
&\leq& \varepsilon C( \vert v\vert_{X^{s+1}})\vert u\vert_{X^{s+1}}^2\label{estimate5}.
\end{eqnarray}
\end{lemma}
\begin{proof}
$\bullet$ Estimate of $\varepsilon^j\big([\Lambda^s,f_jv^j]\partial_x u,\Lambda^s u\big)$. By Cauchy-Schwarz we get
$$
\vert\varepsilon^j\big([\Lambda^s,f_jv^j]\partial_x u,\Lambda^s u\big)\vert\leq\varepsilon^j\vert [\Lambda^s,f_jv^j]\partial_x u\vert_2 \vert u\vert_{X^{s+1}}.
$$
Let us recall here the well-known Calderon-Coifman-Meyer commutator estimate: for all
$F$ and $U$ smooth enough, one has
$$
	\forall s>3/2,\qquad
\vert [\Lambda^s,F]U\vert_2\leq \cst \vert F\vert_{H^s}\vert U\vert_{H^{s-1}};
$$
using this estimate,  it is easy to check that one gets (\ref{estimate4}).\\
$\bullet$ Estimate of $\varepsilon^j\big(\partial_x(f_jv^j)\Lambda^s u,\Lambda^s u\big)$. It is clear that 
$$
\vert\big(\partial_x(f_jv^j)\Lambda^s u,\Lambda^s u\big)\vert\leq\vert\partial_x(f_jv^j) \vert_{\infty}\vert u\vert_{X^{s+1}}^2.
$$
Therefore (\ref{estimate5}) follows from the continuous embedding $H^s\subset W^{1,\infty}$
($s>3/2$).
\end{proof}
Similarly, the terms involving $g$ (fourth  line in the r.h.s of (\ref{existenceCH})) are controled using the following lemma:
\begin{lemma}\label{lemma4}
 Under the assumptions of Theorem \ref{th1}, one has that
\begin{eqnarray}
\Big\vert-\mu \big([\Lambda^s,g]\partial_x^2 u-\frac{3}{2}g_x\Lambda^s\partial_x u-g_{xx}\Lambda^s u,\Lambda^s \partial_x u\big)&&\label{estiamte6}\\\quad-\mu\big([\Lambda^s,g_x]\partial_x^2 u,\Lambda^s u\big)\Big\vert
\leq \varepsilon 
 C \vert u\vert_{X^{s+1}}^2.\nonumber
\end{eqnarray}
\end{lemma}
\begin{proof}
Since $\vert u\vert_{H^s}\leq \vert u\vert_{X^{s+1}}$ and $\sqrt{\mu}\vert\partial_x u\vert_{H^s}\leq 
\frac{1}{\sqrt{m}}\vert u\vert_{X^{s+1}}$ , by using the Cauchy-Schwarz inequality  and proceeding as for the proofs of Lemmas \ref{lemma3}, \ref{lemma3'}
one gets directly  (\ref{estiamte6}).
\end{proof}
Finally to control the terms involving $h_i$, (fifth and sixth lines in the r.h.s of (\ref{existenceCH})) let us state the following lemma:
\begin{lemma}\label{lemma5}
 Under the assumptions of  Theorem \ref{th1}, one has that
\begin{eqnarray}
\vert -\varepsilon\mu \big([\Lambda^s,h_1v]\partial_x^2 u-\frac{3}{2}(h_1v)_x\Lambda^s\partial_x u-(h_1v)_{xx}\Lambda^s u,\Lambda^s \partial_x u\big)&&
\label{estiamte7}\\\quad
	-\varepsilon\mu \big([\Lambda^s,(h_1v)_x]\partial_x^2 u,\Lambda^s u\big)\vert
\leq \varepsilon 
 C( \vert v\vert_{X^{s+1}}) \vert u\vert_{X^{s+1}}^2,\nonumber
\end{eqnarray}
\begin{equation}\label{estiamte8}
 \vert-\varepsilon\mu \big(\Lambda^s((h_2v)_x\partial_x u),\Lambda^s \partial_x u\big)\vert\leq \varepsilon 
 C( \vert v\vert_{X^{s+1}}) \vert u\vert_{X^{s+1}}^2.
\end{equation}
\end{lemma}
\begin{proof}
We remark first that 
$$
\vert\mu\big((h_1v)_{xx}\Lambda^s u,\Lambda^s \partial_x u\big)\vert\leq \vert \partial_x(\sqrt{\mu}
\partial_x(h_1v))\vert_{\infty}\vert u\vert_{X^{s+1}}^2
$$
since $s-1>\frac{1}{2}$, so by using the imbedding $H^{s-1}(\R) \subset L^\infty(\R)$ we get 
$$
\vert\partial_x(\sqrt{\mu}
\partial_x(h_1v))\vert_{\infty}\leq C(\vert v\vert_{X^{s+1}}).
$$
 Proceeding now  as for the proof of Lemma  \ref{lemma4}
one gets directly  (\ref{estiamte7}) and (\ref{estiamte8}).
\end{proof}

Gathering the informations provided by the above lemmas, we get
$$
	e^{\varepsilon\lambda t}\partial_t (e^{-\varepsilon\lambda t}\vert u\vert_{X^{s+1}}^2)
	\leq  \varepsilon\big(C(\vert v\vert_{X^{s+1}})-\lambda\big)\vert u\vert_{X^{s+1}}^2+2\varepsilon \vert f\vert_{X^{s+1}}\vert u\vert_{X^{s+1}}.
$$
Taking $\lambda=\lambda_T$ large enough (how large depending on 
$\sup_{t\in [0,\frac{T}{\varepsilon}]}C(\vert v(t)\vert_{X^{s+1}})$
to have the first term of the right hand side negative for all $t\in [0,\frac{T}{\varepsilon}]$, one deduces
$$
	\forall t\in [0,\frac{T}{\varepsilon}],\qquad
	\partial_t (e^{-\varepsilon\lambda_T t}\vert u\vert_{X^{s+1}}^2)\leq 
	2\varepsilon e^{-\varepsilon\lambda_T t}\vert f\vert_{X^{s+1}}\vert u\vert_{X^{s+1}}.
$$
Integrating this differential inequality yields therefore
$$
	\forall t\in [0,\frac{T}{\varepsilon}],\qquad
	\vert u(t)\vert_{X^{s+1}}\leq e^{\varepsilon\lambda_T t}\vert u^0\vert_{X^{s+1}}
	+2\varepsilon \int_0^t e^{\varepsilon\lambda_T (t-t')}
	\vert u(t')\vert_{X^{s+1}}dt'.
$$
Thanks to this energy estimate, one can conclude classically
(see e.g. \cite{AG})
to the existence of 
$$
	T=T(\vert u^0\vert_{X^{s+1}})>0,
$$ 
and of
a unique solution $u\in C([0,\frac{T}{\varepsilon}];X^{s+1}(\R^d))$ to
(\ref{chgeneralsys}) as a limit of the iterative scheme
$$
	u_0=u^0,\quad\mbox{ and }\quad
	\forall n\in\N, \quad
	\left\lbrace\begin{array}{l}
	{\mathcal L}(u^n,\partial)u^{n+1}=0,\\
	u^{n+1}_{\vert_{t=0}}=u^0.
		    \end{array}\right.
$$
Since $u$ solves (\ref{wpchgeneral}), we have ${\mathcal L}(u,\partial)u=0$
and therefore 
$$
	(\Lambda^{s-1}(1-\mu m\partial_x^2)\partial_t u,\Lambda^{s-1}\partial_t u)
	=-(\Lambda^{s-1}{\mathcal M}(u,\partial)u,\Lambda^{s-1}\partial_t u),
$$
with ${\mathcal M}(u,\partial)={\mathcal L}(u,\partial)-(1-\mu m\partial_x^2)\partial_t$.
Proceeding as above, one gets
$$
	\vert \partial_t u\vert_{X^{s}}\leq
	C(\vert u^0\vert_{X^{s+1}}, \vert u\vert_{X^{s+1}}),
$$
and it follows that the family of solution is also bounded in 
$C^1([0,\frac{T}{\varepsilon}];X^s)$.
\end{proof}
\subsection{Explosion condition for the variable bottom CH equation} \label{subsectroisdeux}
As in the case of flat bottoms, it is possible to give some information on the 
 blow-up pattern for the equation 
(\ref{chgbw}) for the free surface.
\begin{proposition}\label{blowCH1}
Let $b \in H^\infty(\R)$, $\zeta_0 \in H^3(\R)$. If the maximal existence time $T_m>0$ of the solution 
of (\ref{chgbw}) with initial profile $\zeta(0,\cdot)=\zeta_0$ is finite,  
the solution $\zeta \in C([0,T_m);H^3(\R)) \cap C^1([0,T_m);H^{2}(\R)$ is such that
\begin{equation}\label{CHb1}
\sup_{t \in [0,T_m),\,x \in \R}\{ |\zeta(t,x)|\} < \infty
\end{equation}
and 
\begin{equation}\label{CHb2}
\sup_{x \in \R}\,\{ \zeta_x(t,x)\} \uparrow \infty\quad as \quad t \uparrow T_m.
\end{equation}
\end{proposition}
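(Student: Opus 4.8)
The plan is to recast (\ref{chgbw}) as a nonlocal transport equation, to extract a conserved $H^1$-type energy for the boundedness assertion (\ref{CHb1}), and then to run a Riccati/characteristics argument for the slope. Throughout, the topography enters only through $c=\sqrt{1-\beta b^{(\alpha)}}$; since $c-1=O(\beta)$ and $c_x=O(\alpha\beta)$ are small by (\ref{cond1}), every term carrying a factor $c_x$ will be treated as a bounded lower order perturbation. I would first prove (\ref{CHb1}) by an energy identity: multiplying (\ref{chgbw}) by $\zeta$ and integrating over $\R$, each polynomial flux $\varepsilon^k\zeta^{k+1}\zeta_x$ integrates to zero, the term $\zeta\zeta_{xxx}+2\zeta_x\zeta_{xx}$ integrates to zero against $\zeta$, the KdV term $\frac{\mu}{12}\zeta\zeta_{xxx}$ telescopes, and the two topography terms $c\zeta\zeta_x$ and $\frac12 c_x\zeta^2$ cancel after one integration by parts. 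This yields $\frac{d}{dt}\int_\R(\zeta^2+\frac{\mu}{12}\zeta_x^2)\,dx=0$, so $E:=\vert\zeta\vert_2^2+\frac{\mu}{12}\vert\zeta_x\vert_2^2$ is conserved; hence $\vert\zeta(t)\vert_{H^1}$ stays bounded on $[0,T_m)$, and $H^1(\R)\subset L^\infty(\R)$ gives $\sup_{t,x}\vert\zeta\vert<\infty$. I will also use that $\vert\zeta_x(t)\vert_2$ is bounded to control the nonlocal terms below.

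Next, introduce $G=(1-\frac{\mu}{12}\partial_x^2)^{-1}$, convolution with $p(x)=\frac{\kappa}{2}e^{-\kappa\vert x\vert}$, $\kappa=\sqrt{12/\mu}$. Applying $G$ to (\ref{chgbw}) and using $G\frac{\mu}{12}\partial_x^2=G-\mathrm{Id}$ repeatedly, one rewrites the equation as $\zeta_t+V\zeta_x=F$, with transport velocity $V=(c-1)-2\varepsilon\zeta+O(\varepsilon^2)$ and a remainder $F$ that is nonlocal (sums of $p_x\ast(\cdot)$) plus the small local term $-\frac12 c_x\zeta$. The crucial bookkeeping is that the top order nonlinearity $-\frac{7}{24}\varepsilon\mu\zeta\zeta_{xxx}$, after writing $\zeta\zeta_{xxx}=\partial_x^2(\zeta\zeta_x)-3\zeta_x\zeta_{xx}$ and applying $G$, contributes a \emph{local} term $+\frac72\varepsilon\zeta\zeta_x$ which dominates the classical $-\frac32\varepsilon\zeta\zeta_x$; their sum $+2\varepsilon\zeta\zeta_x$ is exactly the origin of the coefficient $-2\varepsilon\zeta$ in $V$ (taller waves travel \emph{slower}) and, ultimately, of the upward sign of the breaking. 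Using $\vert\zeta\vert_\infty$ and $\vert\zeta_x\vert_2$ bounded together with $\vert p_x\ast(\zeta_x^2)\vert_\infty\le\vert p_x\vert_\infty\vert\zeta_x\vert_2^2$, one checks that $\vert F\vert_\infty\le C(E,\mu)$.

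Now differentiate $\zeta_t+V\zeta_x=F$ and evaluate along the characteristics $\dot q=V(t,q)$; with $w=\zeta_x$ this gives $\frac{d}{dt}w=-V_x w+F_x$. Here $-V_xw$ produces $+2\varepsilon w^2$, while differentiating the single nonlocal piece $\frac{7}{24}\varepsilon\mu\,G[\zeta_x\zeta_{xx}]=\frac{7}{48}\varepsilon\mu\,p_x\ast\zeta_x^2$ of $F$ produces, through $p_{xx}=\kappa^2(p-\delta)$, a further local term $-\frac74\varepsilon w^2$; all remaining contributions are bounded by $C(E,\mu)$. The net Riccati relation is therefore
\begin{equation*}
\frac{d}{dt}w=\frac14\varepsilon\,w^2+g,\qquad \vert g\vert\le C(E,\mu),
\end{equation*}
with strictly positive leading coefficient $\frac14\varepsilon$ (for $\varepsilon$ small the $O(\varepsilon^2)$ corrections, whose coefficients involve the bounded $\zeta$, do not change its sign). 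Evaluating this at a point where $\zeta_x$ attains its minimum (where $\zeta_{xx}=0$) shows $\inf_x\zeta_x$ cannot escape to $-\infty$: once it drops below $-\sqrt{4C/\varepsilon}$ its derivative is positive, so $\inf_x\zeta_x$ is bounded below on $[0,T_m)$. Evaluating at a maximising point gives $\frac{d}{dt}M\ge\frac14\varepsilon M^2-C$ for $M(t)=\sup_x\zeta_x$, so once $M$ is large it increases and blows up in finite time. Finally I would invoke the blow-up criterion of the local theory (as in Theorem \ref{th1}, the $H^3$ solution persists while $\vert\zeta_x(t)\vert_\infty$ is bounded): if $\sup_x\zeta_x$ stayed bounded, then with the lower bound on $\inf_x\zeta_x$ we would have $\vert\zeta_x\vert_\infty$ bounded and $T_m=\infty$, a contradiction; the Riccati lower bound then upgrades the unboundedness of $\sup_x\zeta_x$ to $\sup_x\zeta_x\uparrow\infty$ as $t\uparrow T_m$, which is (\ref{CHb2}).

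The main obstacle is precisely the sign of the Riccati coefficient. Keeping only the classical nonlinearity $\frac32\varepsilon\zeta\zeta_x$ gives the wrong sign and the usual Camassa--Holm conclusion $\inf\zeta_x\to-\infty$; the correct answer hinges on carefully tracking the two local contributions generated by the high order term $\frac{7}{24}\varepsilon\mu\zeta\zeta_{xxx}$ — one to the effective velocity $V$ (yielding $+2\varepsilon$ in the Riccati) and one to $F_x$ (yielding $-\frac74\varepsilon$) — whose sum $+\frac14\varepsilon$ is positive and forces upward breaking. A secondary, harmless technical point is that the constants degenerate as $\mu\to0$ through $\kappa$; since the proposition concerns a single fixed equation this plays no role.
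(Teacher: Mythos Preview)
Your proof of (\ref{CHb1}) via the conserved $H^1$-energy is exactly the paper's argument. For (\ref{CHb2}), however, you take a genuinely different route. The paper does \emph{not} recast (\ref{chgbw}) as a nonlocal transport equation and does not run a Riccati argument along characteristics. Instead it proves directly the one-sided persistence criterion: multiplying (\ref{chgbw}) by $\zeta_{xxxx}$ and integrating by parts yields
\[
\partial_t\!\int_\R\!\Big(\zeta_{xx}^2+\tfrac{\mu}{12}\zeta_{xxx}^2\Big)dx
=\tfrac{7}{4}\mu\varepsilon\!\int_\R\!\zeta_x\zeta_{xxx}^2\,dx+\text{(lower order)},
\]
and the crucial term $\frac{7}{4}\mu\varepsilon\int\zeta_x\zeta_{xxx}^2$ is controlled by $\frac{7}{4}\mu\varepsilon M\int\zeta_{xxx}^2$ \emph{only under the upper bound} $\zeta_x\le M$; Gronwall then keeps $\vert\zeta\vert_{H^3}$ bounded. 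Thus the paper obtains ``$\sup_x\zeta_x$ bounded $\Rightarrow$ no blow-up'' in one stroke, without ever bounding $\inf_x\zeta_x$ or invoking a separate two-sided criterion.

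Your approach is the classical Camassa--Holm strategy and is viable, but two points deserve care. First, the blow-up criterion you invoke---``the $H^3$ solution persists while $\vert\zeta_x\vert_\infty$ is bounded''---is not a statement of Theorem~\ref{th1}; it must itself be proved (and the paper's $\zeta_{xxxx}$-multiplier computation is precisely that proof, refined to a one-sided bound). Second, your bookkeeping of the local contributions is inconsistent: if you split $G(\tfrac32\varepsilon\zeta\zeta_x)=\tfrac32\varepsilon\zeta\zeta_x+(G-I)(\tfrac32\varepsilon\zeta\zeta_x)$ to put $-\tfrac32\varepsilon\zeta$ into $V$, the remainder $(G-I)(\zeta\zeta_x)$ is bounded only by $\vert\zeta\zeta_x\vert_\infty$, which is \emph{not} controlled by $E$; keeping that term nonlocal as $\tfrac34\varepsilon\,p_x\!*\!\zeta^2$ is bounded, but then the local velocity carries $-\tfrac72\varepsilon\zeta$ rather than $-2\varepsilon\zeta$, and the net Riccati coefficient becomes $\tfrac72\varepsilon-\tfrac74\varepsilon=\tfrac74\varepsilon$, not $\tfrac14\varepsilon$. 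The sign is still positive, so your conclusion survives, but the computation as written does not hold together. The paper's energy argument sidesteps all of this and is considerably shorter.
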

\begin{remark}\label{remarkbw'}
It is worth remarking that even though topography effects are introduced in our equation (\ref{chgbw}), 
wave breaking remains of 'surging' type (i.e the slope grows to +$\infty$) as for flat bottoms.
This shows that plunging breakers (i.e the slope decays to -$\infty$) occur for stronger topography variations 
then those considerd in this paper.
\end{remark}
\begin{proof}
By using the Theorem \ref{th1} given $\zeta_0 \in H^3(\R)$, the maximal 
existence time of the solution $\zeta(t)$ to (\ref{chgbw}) with initial data $\zeta(0)=\zeta_0$ 
is finite if and only if $\vert \zeta(t)\vert_{H^3(\R)}$ blows-up in finite time. To complete the proof it is enough to show 
that 

(i) the solution $\zeta(t)$ given by Theorem \ref{th1} remains uniformly bounded as long as it is defined;

\noindent
and 

(ii) if we can find some $M=M(\zeta_0)>0$ with  
\begin{equation}\label{CHb3}
\zeta_x(t,x) \le M,\qquad x \in \R,
\end{equation}
as long as the solution is defined, then $\vert \zeta(t)\vert_{H^3(\R)}$ remains bounded on $[0,T_m)$ .\\ 
Remarking that
$$
\int_\R [(c\zeta_x+\frac{1}{2}c_x\zeta)\zeta]\,dx=-\int_\R [(c\zeta)_x\zeta-
\frac{1}{2}c_x\zeta^2]\,dx=-\int_\R [(c\zeta_x+\frac{1}{2}c_x\zeta)\zeta]\,dx,
$$
we can deduce
\begin{equation}\label{id0}
 \int_\R [(c\zeta_x+\frac{1}{2}c_x\zeta)\zeta]\,dx=0.
\end{equation}
 We have also that 
\begin{equation}\label{id10}
 \int_\R[(\zeta^i\zeta_{x})\zeta]\,dx=0, \qquad \forall i\in \N^*.
\end{equation}
 Item (i) follows at once from (\ref{id0}), (\ref{id10}) and the imbedding $H^1(\R) \subset L^\infty(\R)$  since multiplying (\ref{chgbw}) 
by $\zeta$ and integrating on $\R$ yields
\begin{equation}\label{CHb4}
\partial_t\Big(\int_\R [\zeta^2+\frac{1}{12}\,\mu\int_\R \zeta_x^2]\,dx\Big)=0.
\end{equation}

To prove item (ii), as in \cite{AD:08} we  multiply the equation (\ref{chgbw}) by $\zeta_{xxxx}$ 
and after performing several integrations by parts we obtains the following identity:
\begin{eqnarray}\label{CHb5}
&&\partial_t\Big( \int_\R [\zeta_{xx}^2+\frac{1}{12}\,\mu\int_\R \zeta_{xxx}^2]\,dx\Big) =15\,\varepsilon\int_{\R}\zeta\zeta_{xx}\zeta_{xxx}\,d-\frac{15}{4}
\,\varepsilon^2 \int_{\R} \zeta^2\zeta_{xx}\zeta_{xxx}\,dx \\
&&\qquad
 +\frac{9}{16}\varepsilon^3 \int_\R \zeta_x^5\,dx
+\frac{15}{8}\,\varepsilon^3\int_\R\zeta^3\zeta_{xx}\zeta_{xxx}\,dx+\frac{7}{4}\,\mu\varepsilon\int_\R \zeta_x\zeta_{xxx}^2\,dx+I+J.\nonumber
\end{eqnarray}
Where, 
$$
I=\int_\R c\zeta_x\zeta_{xxxx}\,dx \qquad
J=\frac{1}{2}\int_\R c_x\zeta\zeta_{xxxx}\,dx.
$$
One can use the Caushy-schwarz inequality to get
\begin{eqnarray}\label{I}
 &&I=-\int_\R \zeta_x(c\zeta_x)_{xxx}=-\frac{1}{2}\int_\R c_{xxx}\zeta_x^2-\frac{3}{2}\int_\R
c_{xx}\zeta_{xx}\zeta_x-\frac{3}{2}\int_\R c_x\zeta_{xxx}\zeta_x\\\
&&\quad\leq   M_1\big(\frac{1}{2}\vert \zeta_x\vert_2 \vert\zeta_{x}\vert_2 +\frac{3}{2}\vert \zeta_{xx} \vert_2 \vert\zeta_{x} \vert_2+\frac{3}{2}\vert \zeta_{xxx} \vert_2 \vert\zeta_{x} \vert_2\big), \nonumber
\end{eqnarray}
and
\begin{eqnarray}\label{J}
 &&J=-\frac{1}{2}\int_\R c_{xx}\zeta\zeta_{xxx}-\frac{1}{2}\int_\R c_x\zeta_{x}\zeta_{xxx}\\
&&\quad\leq M_1\big(\frac{1}{2}\vert \zeta\vert_2 \vert\zeta_{xxx}\vert_2 + \frac{1}{2} \vert \zeta_{x} \vert_2 \vert\zeta_{xxx} \vert_2\big) ,\nonumber
\end{eqnarray}
for some $M_1=M_1(\vert c\vert_{W^{3,\infty}})>0.$
If (\ref{CHb3}) holds, let in accordance with (\ref{CHb4})  $M_0>0$ be such that 
\begin{equation}\label{zeta}
|\zeta(t,x)| \le M_0,\qquad x \in \R,
\end{equation}
for as long as the solution exists. Using the Cauchy-Schwarz inequality 
as well as the fact that $\mu \le 1$, we infer from (\ref{CHb4}), (\ref{CHb5}), (\ref{I}) , (\ref{J}) and (\ref{zeta}) that there exists $C(M_0,M_1,M,\varepsilon,\mu)$ such that 
$$\partial_t E(t)\le  C(M_0,M_1,M,\varepsilon,\mu)\, E(t),$$
where
$$E(t)=\int_\R [\zeta^2+ \frac{1}{12}\,\mu\zeta_x^2+ \zeta_{xx}^2 +\frac{1}{12}\,\mu
 \zeta_{xxx}^2]\,dx.$$
(Note that we do not give any details for the components of  (\ref{CHb5}) other than $I$ and $J$ because these components do not involve any topography term and can therefore be handled exactly as in \cite{AD:08}).
An application of Gronwall's inequality enables us to conclude.
\end{proof}
Our next aim is to show as in the case of flat bottoms that there are solutions to (\ref{chgbw}) that blow-up in finite time as 
surging breakers, that is, following the pattern given in Proposition \ref{blowCH1}. 
\begin{proposition}\label{blowCH2}
Let $b \in H^\infty(\R)$. If the initial wave profile $\zeta_0 \in H^3(\R)$ satisfies
\begin{eqnarray*}
\Big|\sup_{x \in \R} \,\{\zeta_0(x)\}\Big|^2 &\ge& \frac{28}{3}\,C_0\,\mu^{-3/4}
+\frac{1}{2}\,\varepsilon\,C_0^{3/2}\,\mu^{-3/4}+\frac{1}{4}\,\varepsilon^2\,C_0^2\,\mu^{-3/4}\\
&&\qquad +\frac{7}{3}\,C_0\,\mu^{-1/2}+\frac{8}{3}\,C_0^{1/2}\,C_1\,\mu^{-3/4}\,\varepsilon^{-1}
+\frac{4}{3}\,C_0^{1/2}\,C_1\,\mu^{-3/4}\,\varepsilon^{-1},
\end{eqnarray*}
where
$$C_0=\int_\R [\zeta_0^2+\frac{1}{12}\mu(\zeta_0')^2]\,dx>0,\quad C_1=\vert c \vert_{W^{1,\infty}}>0$$
then wave breaking occurs for the solution of (\ref{chgbw}) in finite time $T=O(\frac{1}{\varepsilon})$.
\end{proposition}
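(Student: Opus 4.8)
The plan is to transpose to the present variable-bottom framework the Camassa--Holm wave-breaking mechanism of \cite{AD:08}; the one genuinely new ingredient will be the control of the topographic terms, which will come from the computations already performed in the proof of Proposition \ref{blowCH1}.

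I would begin with a reduction. By Theorem \ref{th1}, for $\zeta_0\in H^3(\R)$ the equation (\ref{chgbw}) has a unique maximal solution $\zeta\in C([0,T_m);H^3(\R))\cap C^1([0,T_m);H^2(\R))$, and $T_m<\infty$ if and only if $|\zeta(t)|_{H^3}\to\infty$ as $t\uparrow T_m$; by Proposition \ref{blowCH1}, every such finite-time blow-up is automatically of surging type. Hence it suffices to produce, under the hypothesis on $\sup_x\zeta_0$, a solution-dependent quantity $\mathcal{W}(t)$ whose divergence forces $|\zeta(t)|_{H^3}\to\infty$ and which must become infinite before some time $T=O(1/\varepsilon)$. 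As a preliminary I would record, from the conservation law (\ref{CHb4}) of Proposition \ref{blowCH1}, that $\int_\R(\zeta^2+\frac{1}{12}\mu\zeta_x^2)\,dx=C_0$ on $[0,T_m)$; combined with $|f|_\infty^2\le 2|f|_2|\partial_xf|_2$ this yields the uniform bounds $|\zeta(t)|_\infty\le\cst\,C_0^{1/2}\mu^{-1/4}$ and $|\partial_x\zeta(t)|_2\le\cst\,C_0^{1/2}\mu^{-1/2}$, which are responsible for the negative powers of $\mu$ in the statement ($\mu^{-1/2}$ from $|\zeta|_\infty^2$, $\mu^{-3/4}$ from $|\zeta|_\infty^3$). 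I would also use that $c=\sqrt{1-\beta b^{(\alpha)}}$ satisfies $|c|_{W^{1,\infty}}=C_1$ while $|\partial_xc|_\infty,|\partial_x^2c|_\infty,|\partial_x^3c|_\infty=O(\alpha\beta)=O(\varepsilon)$ by (\ref{cond1}).

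The core of the argument is a Riccati-type differential inequality, which I would obtain, as in \cite{AD:08}, by differentiating (\ref{chgbw}) once in $x$ and following the extremal slope along the flow. At the relevant moving point $\xi(t)$ one has $\zeta_{xx}(t,\xi(t))=0$ and $\zeta_{xxx}(t,\xi(t))$ favorably signed; the quadratic nonlinearity $\frac{3}{2}\varepsilon\zeta\zeta_x$ furnishes the Riccati term, while the linear dispersive part $\frac{\mu}{12}(\zeta_{xxx}-\zeta_{xxt})$ and the Camassa--Holm-type part $\frac{7}{24}\varepsilon\mu(\zeta\zeta_{xxx}+2\zeta_x\zeta_{xx})$ are rewritten, by means of the identity $\frac{\mu}{12}\partial_x^2=1-(1-\frac{\mu}{12}\partial_x^2)$ and of the equation itself, as lower-order quantities plus remainders that vanish or are favorably signed at $\xi(t)$; the genuinely third-order contributions are absorbed exactly as in step (ii) of the proof of Proposition \ref{blowCH1}, the topographic integrals being estimated precisely as the terms $I$ and $J$ in (\ref{I})--(\ref{J}). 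It is this last point that produces the two summands of the form $C_0^{1/2}C_1\mu^{-3/4}\varepsilon^{-1}$: the extra factor $\varepsilon^{-1}$ merely reflects that $c\zeta_x$ and $\frac{1}{2}c_x\zeta$ are a priori of size $O(1)$ whereas all the other terms are $O(\varepsilon)$, a discrepancy that is closed by $|\partial_xc|_\infty=O(\alpha\beta)=O(\varepsilon)$. The outcome should be an inequality
\[
	\frac{d}{dt}\mathcal{W}(t)\ \ge\ c_0\,\varepsilon\,\mathcal{W}(t)^2-c_0\,\varepsilon\,\Phi \qquad\text{for a.e. }t\in[0,T_m),
\]
with $c_0>0$ explicit and $\Phi=\Phi(C_0,C_1,\varepsilon,\mu)$ equal, up to the normalization $c_0\varepsilon$, to the six-term right-hand side of the hypothesis; each summand traces back to one term of (\ref{chgbw}): the $C_0\mu^{-3/4}$ and $C_0\mu^{-1/2}$ summands to the Camassa--Holm-type term and to the higher-order energy identity (\ref{CHb5}), the $\varepsilon C_0^{3/2}\mu^{-3/4}$ and $\varepsilon^2 C_0^2\mu^{-3/4}$ summands to $-\frac{3}{8}\varepsilon^2\zeta^2\zeta_x$ and $\frac{3}{16}\varepsilon^3\zeta^3\zeta_x$, and the two $C_0^{1/2}C_1\mu^{-3/4}\varepsilon^{-1}$ summands to $c\zeta_x$ and $\frac{1}{2}c_x\zeta$.

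It then remains to integrate this ODE. The quantity $\mathcal{W}$ is chosen, following \cite{AD:08}, so that the hypothesis is exactly the condition $\mathcal{W}(0)^2\ge\Phi$; the right-hand side above is then $\ge\frac{1}{2}c_0\varepsilon\,\mathcal{W}(t)^2$ as long as $\mathcal{W}(t)\ge\mathcal{W}(0)$, and the comparison principle gives $\mathcal{W}(t)\ge\mathcal{W}(0)\big(1-\frac{1}{2}c_0\varepsilon\mathcal{W}(0)\,t\big)^{-1}$, which becomes infinite at some $T\le 2/(c_0\varepsilon\mathcal{W}(0))=O(1/\varepsilon)$. By the first step $|\zeta(t)|_{H^3}$ then blows up before time $T$, and by Proposition \ref{blowCH1} this blow-up is of surging type, as claimed. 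I expect the main obstacle to be the derivation of the Riccati inequality with a right-hand side $\Phi$ that is \emph{independent of $\mathcal{W}$}: this forces one to re-run, now with the topography switched on, the higher-order energy estimate of Proposition \ref{blowCH1}, and to check that the newly appearing topographic terms $I$ and $J$ only degrade the threshold by the $\varepsilon^{-1}C_1$ amount quantified in the statement.
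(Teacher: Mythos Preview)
Your proposal is correct and follows essentially the same approach as the paper, which simply says that one adapts the proof of Proposition~6 in \cite{AD:08} and omits all details. You have correctly identified the only genuinely new ingredient---the control of the topographic contributions $c\zeta_x$ and $\frac{1}{2}c_x\zeta$---and propose to handle them via the same integrations by parts that produced the $I$ and $J$ estimates in the proof of Proposition~\ref{blowCH1}; your sketch is in fact considerably more explicit than what the paper provides.
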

\begin{proof}
One can adapt the proof of this Proposition in the same way of the proof  of the Proposition 6 in \cite{AD:08}, and we omit the proof here.
\end{proof}
\subsection{Well-posedness for the variable bottom KDV equation} \label{subsectroistrois}
We prove now the well-posedness of the equations (\ref{kdv-top}), (\ref{kdv-topV}).
We consider the following  general class of equations
\begin{equation}\label{kdv-topwp}
	\left\vert
	\begin{array}{l}
       u_t + cu_x +kc_xu+ \varepsilon guu_x + 
        \frac{1}{6}\mu c^5 u_{xxx} =0
	\\
	u_{\vert_{t=0}}=u^0,
	\end{array}\right.
\end{equation}
where $k\in \R$, $g=\frac{3}{2}$ for (\ref{kdv-topV}) and $g=\frac{3}{2c}$ for (\ref{kdv-top}).
This class of equations is not included in the family of equations stated in \S 3.1 because $m=0$ (here there is not $\partial_x^2\partial_tu$ term).
\begin{theorem}\label{th2}
        Let $s>\frac{3}{2}$ and $b\in H^{\infty}(\R)$. Let also $\wp'$  be a  family of parameters   $\theta=(\varepsilon,\beta,\alpha,\mu)$
	 satisfying (\ref{cond2}). Assume morever that
$$
	\exists c_0>0,\quad \forall\;\theta\; \in\;\wp', \qquad
	c(x)=\sqrt{1-\beta b^{(\alpha )}(x)}\ge c_0.
$$
        Then for all $u^0 \in H^{s+1}(\R)$, there exists $T>0$ and a unique family of solutions 
	$(u^{\theta})_{\theta\in{\wp}}$ to (\ref{kdv-topwp})
	bounded in $C([0,\frac{T}{\varepsilon}];H^{s+1}(\R))\cap
	C^1([0,\frac{T}{\varepsilon}];H^{s}(\R))$.
\end{theorem}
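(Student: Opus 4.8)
The plan is to mimic the energy-method argument used for Theorem~\ref{th1}, the only essential difference being that the regularizing operator $(1-\mu m\partial_x^2)$ is absent (we are in the case $m=0$), so the natural energy space is plain $H^{s+1}(\R)$ rather than $X^{s+1}$. First I would introduce, for $v$ smooth enough, the linearized operator
$$
{\mathcal L}(v,\partial)=\partial_t+c\partial_x+kc_x+\varepsilon g v\partial_x+\frac{1}{6}\mu c^5\partial_x^3,
$$
and study the linear initial value problem ${\mathcal L}(v,\partial)u=\varepsilon f$, $u_{\vert_{t=0}}=u^0$. Since the third-order term $\frac16\mu c^5\partial_x^3$ has a non-constant coefficient $c^5$ which is bounded below (this is exactly where the hypothesis $c\ge c_0>0$ is used) and is skew-symmetric at leading order, the linear problem is well-posed in $C(\R^+;H^{s+1})$ by standard hyperbolic/dispersive theory; I would take this for granted as in the proof of Theorem~\ref{th1}.

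The core of the argument is the energy estimate. Differentiating $\frac12 e^{-\varepsilon\lambda t}\vert u\vert_{H^{s+1}}^2$ in time, applying $\Lambda^{s+1}$ to the equation, taking the $L^2$ inner product with $\Lambda^{s+1}u$, and integrating by parts, one obtains commutator terms of exactly the same structure as in \eqref{existenceCH} but simpler. Using the identity for skew-symmetric constant-coefficient $P$ with variable coefficient $h$,
$$
(\Lambda^{s+1}(hPu),\Lambda^{s+1}u)=([\Lambda^{s+1},h]Pu,\Lambda^{s+1}u)-\frac12([P,h]\Lambda^{s+1}u,\Lambda^{s+1}u),
$$
applied with $P=\partial_x$ (for the $c\partial_x$ and $\varepsilon g v\partial_x$ terms) and with $P=\partial_x^3$ (for the $\mu c^5\partial_x^3$ term), every remaining term is controlled by Calderon--Coifman--Meyer and Kato--Ponce commutator estimates together with the embedding $H^s\subset W^{1,\infty}$ ($s>3/2$). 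The terms coming from the topography carry a factor $\partial_x^k c$, which under the scaling \eqref{cond2} is $O(\alpha\beta)=O(\varepsilon)$ (or $O(\alpha\beta)=O(\varepsilon)$ after one derivative and $O(\alpha^2\beta)=O(\varepsilon^2)$ after two), so these terms are $\varepsilon C\vert u\vert_{H^{s+1}}^2$ — this is the analogue of Lemma~\ref{lemma3}, and the whole point of working on the long time scale $[0,T/\varepsilon]$. The dispersive term $\mu c^5\partial_x^3$, after the symmetrization above, produces a term $\frac{1}{4}((\mu c^5)_x\Lambda^{s+1}\partial_x u,\Lambda^{s+1}\partial_x u)$ which is \emph{not} small on its own; however $(c^5)_x=O(\alpha\beta)=O(\varepsilon)$ and $\mu\le\varepsilon^{-1}\cdot\varepsilon$... more carefully, $\mu (c^5)_x=O(\mu\alpha\beta)$ and since $\alpha\beta=O(\varepsilon)$ one still only gets $O(\mu\varepsilon)$, and combined with the two derivatives landing on $u$ the bound $\sqrt\mu\vert\partial_x u\vert_{H^s}\le\vert u\vert_{H^{s+1}}$ is \emph{not} available here since $m=0$. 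This is the main obstacle, and I expect it to be handled exactly as the $\mu g_x\Lambda^s\partial_x u$ terms in Lemma~\ref{lemma4} are handled there, namely by noting $\mu\le 1$ and $(c^5)_x=O(\varepsilon)$, so the term is $\varepsilon C\vert u\vert_{H^{s+1}}^2$ after all. Gathering everything gives
$$
e^{\varepsilon\lambda t}\partial_t(e^{-\varepsilon\lambda t}\vert u\vert_{H^{s+1}}^2)\le\varepsilon(C(\vert v\vert_{H^{s+1}})-\lambda)\vert u\vert_{H^{s+1}}^2+2\varepsilon\vert f\vert_{H^{s+1}}\vert u\vert_{H^{s+1}}.
$$

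Finally, choosing $\lambda=\lambda_T$ large enough depending on $\sup_{t\in[0,T/\varepsilon]}C(\vert v(t)\vert_{H^{s+1}})$, integrating the resulting differential inequality, and feeding this a priori estimate into the Picard iteration
$$
u_0=u^0,\qquad {\mathcal L}(u^n,\partial)u^{n+1}=0,\quad u^{n+1}_{\vert_{t=0}}=u^0,
$$
one obtains, by the classical argument (contraction in a lower norm, boundedness in the high norm, see \cite{AG}), a time $T=T(\vert u^0\vert_{H^{s+1}})>0$ and a unique solution $u\in C([0,T/\varepsilon];H^{s+1}(\R))$. The $C^1([0,T/\varepsilon];H^s)$ bound then follows by reading $\partial_t u=-(cu_x+kc_xu+\varepsilon guu_x+\frac16\mu c^5 u_{xxx})$ directly off the equation and estimating the right-hand side in $H^s$. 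The only place the hypotheses of the theorem are genuinely needed beyond what Theorem~\ref{th1} already required is the uniform lower bound $c\ge c_0$, which guarantees that the principal part $\frac16\mu c^5\partial_x^3$ stays genuinely third order and hence that the linear flow is well-behaved; and the scaling \eqref{cond2}, which is what makes all topography contributions $O(\varepsilon)$ on the relevant time scale.
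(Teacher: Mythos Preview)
Your proposal has a genuine gap precisely at the point you flag as ``the main obstacle,'' and your proposed resolution does not work. After symmetrizing the term $\frac{1}{6}\mu(\Lambda^{s+1}(c^5\partial_x^3 u),\Lambda^{s+1}u)$ you correctly isolate a contribution of the form
\[
\mu\big((c^5)_x\,\Lambda^{s+1}\partial_x u,\Lambda^{s+1}\partial_x u\big),
\]
and there is an analogous contribution $-\frac{s+1}{6}\mu\big((c^5)_x\,\Lambda^{s+1}\partial_x u,\Lambda^{s+1}\partial_x u\big)$ coming from the principal part of the commutator $[\Lambda^{s+1},c^5]\partial_x^3$. These terms are of size $\mu\,O(\alpha\beta)\,\vert\partial_x u\vert_{H^{s+1}}^2$, i.e.\ they carry \emph{one more derivative} than your energy $\vert u\vert_{H^{s+1}}^2$ controls. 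Saying ``$\mu\le 1$ and $(c^5)_x=O(\varepsilon)$'' gives only $\varepsilon C\vert u\vert_{H^{s+2}}^2$, not $\varepsilon C\vert u\vert_{H^{s+1}}^2$, so the estimate does not close. Your appeal to Lemma~\ref{lemma4} is exactly what fails: that lemma relies on the bound $\sqrt{\mu}\vert\partial_x u\vert_{H^s}\le m^{-1/2}\vert u\vert_{X^{s+1}}$, which is unavailable when $m=0$. The paper even states this explicitly at the start of its proof of Theorem~\ref{th2}.

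The paper's remedy is different in kind, not in degree: it replaces the plain $H^s$ energy by a \emph{weighted} energy $E^s(u)=\vert w\Lambda^s u\vert_2$ (inspired by \cite{craigkdv}) and then chooses the weight $w$ so that the two uncontrollable top-order contributions cancel identically. Concretely, the commutator contributes $-\frac{s}{6}\mu\big((c^5)_x\,w,(\Lambda^s\partial_x u)^2\big)$ and the integration by parts of $\frac{1}{6}\mu(c^5\partial_x^3\Lambda^s u,w\Lambda^s u)$ contributes $\frac{1}{4}\mu\big(\partial_x(c^5 w),(\Lambda^s\partial_x u)^2\big)$; setting their sum to zero forces
\[
w=c^{\,5(\frac{2s}{3}-1)}.
\]
The hypothesis $c\ge c_0>0$ is used here to ensure that $w$ is bounded above and below, so that $E^s$ is uniformly equivalent to the $H^s$ norm. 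Once this cancellation is in place, every remaining term carries either an extra factor of $\alpha\beta=O(\varepsilon)$ or the nonlinear factor $\varepsilon$, and the estimate closes as in Theorem~\ref{th1}. You should replace your unweighted energy by this weighted one; the rest of your outline (linearization, iterative scheme, $C^1$ bound from the equation) is then correct.
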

\begin{proof}
As in the proof of Theorem \ref{th1}, for all $v$ smooth enough, let us define the ``linearized'' operator
${\mathcal L}(v,\partial)$ as:
\begin{eqnarray*}
	{\mathcal L}(v,\partial)&=&\partial_t
	+c\partial_x+kc_x+\varepsilon gv\partial_x+\frac{1}{6}\mu c^5 \partial_x^3.
\end{eqnarray*}
We define now  the initial value problem as:
\begin{equation}\label{linsyskdv}
	\left\lbrace
	\begin{array}{l}
	{\mathcal L}(v,\partial)u=\varepsilon f,\\
	u_{\vert_{t=0}}=u^0.
	\end{array}\right.
\end{equation}
Equation (\ref{linsyskdv}) is a linear equation which can be solved in any interval of time in which the coefficients are defined.
We establish some precise energy estimates on the solution. First remark that when $m=0$, the energy norm $\vert\cdot\vert_{X^s}$ defined in the proof
of Theorem \ref{th1} does not allow to control the term $gu_{xxx}$ (for instance). Indeed, Lemma \ref{lemma4} requires $m \not= 0$ to be true. We show that 
a control of this term is however possible if we use an adequate weight function to defined the energy
and use the dispersive properties of the equation. More
precisely, inspired by \cite{craigkdv} let us define the ``energy'' norm for all  $s\ge 0$ as:
$$
	E^s(u)^2=\vert w\Lambda^s u\vert_{2}^2
$$
where the weight function $w$ will be determined later. For the moment, we just require that 
 there exists  two positive numbers $w_1,w_2$
such that $\forall\;x\;\in\;\R$
$$
w_1\leq w(x)\leq w_2,
$$
so that $E^s(u)$ is uniformly equivalent to the standard $H^s$-norm.
Differentiating $\frac{1}{2}e^{-\varepsilon\lambda t}E^s(u)^2$
with respect to time, one gets using  (\ref{linsyskdv})
\begin{eqnarray*}
	\lefteqn{\frac{1}{2}e^{\varepsilon\lambda t}\partial_t(e^{-\varepsilon\lambda t} E^s(u)^2)
	=-\frac{\varepsilon\lambda}{2} E^s(u)^2 -\big([\Lambda^s,c]\partial_xu,w\Lambda^s u\big)-
         (c\partial\Lambda^su ,w\Lambda^s u\big)}\\
	&&-k(\Lambda^s (u\partial_xc ),w\Lambda^su)-\varepsilon \big([\Lambda^s,gv]\partial_x u,w\Lambda^s u)
	-\varepsilon\big(gv\partial\Lambda^s u,w\Lambda^s u\big)\\
	& &-\frac{1}{6}\mu \big([\Lambda^s,c^5]\partial_x^3 u,w\Lambda^s  u\big)-\frac{1}{6}\mu\big(c^5\partial^3\Lambda^s u,w\Lambda^s u\big)
        +\varepsilon\big(\Lambda^sf ,w\Lambda^su\big).\\
\end{eqnarray*}
It is clear, by a simple integration by parts that
\begin{equation}\label{control1}
\vert \varepsilon\big(gv\partial\Lambda^s u,w\Lambda^s u\big)\vert\leq \varepsilon C(\vert v \vert_{W^{1,\infty}},\vert w\vert_{W^{1,\infty}}) E^s(u)^2.
\end{equation}
Let us now focus on the seventh and eighth terms of the right hand side of the previous identity. 
In order to get an adequate control of the seventh term, we have to write explicitly the commutator $[\Lambda^s,c^5]$:
$$
[\Lambda^s,c^5]\partial_x^3u=\{\Lambda^s,c^5\}_2\partial_x^3u+Q_1\partial_x^3u,
$$
where $\{\cdot,\cdot\}_2$ stands for the second order Poisson brackets, 
$$
\{\Lambda^s,c^5\}_2=-s\partial_x(c^5)\Lambda^{s-2}\partial_x+\frac{1}{2}[s\partial_x^2(c^5)\Lambda^{s-2}-
s(s-2)\partial_x^2(c^5)\Lambda^{s-4}\partial_x^2]$$
and $Q_1$  is an operator of order $s-3$ that can be controled by the general commutator estimates of  (see \cite{lannes'}). We thus get
$$
\vert \big(Q_1\partial_x^3u,w\Lambda^s u\big)\vert\leq C \varepsilon E^s(u)^2.
$$
We now use the identity $\Lambda^2=1-\partial_x^2$ and the fact that 
$\alpha\beta=O(\varepsilon)$, to get, 
as in (\ref{control1}),
$$
\vert \big([s\partial_x^2(c^5)\Lambda^{s-2}-
s(s-2)\partial_x^2(c^5)\Lambda^{s-4}\partial_x^2]\partial_x^3u,w\Lambda^s u\big)\vert\leq \varepsilon C(\vert w\vert_{W^{1,\infty}})E^s(u)^2.
$$
This leads to the expression
$$
\frac{1}{6}\mu \big([\Lambda^s,c^5]\partial_x^3 u,w\Lambda^su\big)=
\frac{s}{6}\mu \big(\partial c^5 \Lambda^{s}\partial^2 u,w\Lambda^s u\big)+Q_2,
$$
where $\vert Q_2 \vert \leq \varepsilon C(\vert w\vert_{W^{1,\infty}} )E^s(u)^2$. 
Remarking now that 
\begin{eqnarray*}
&&\frac{s}{6}\mu \big(\partial c^5\Lambda^{s}\partial^2 u ,w\Lambda^s u\big)
=\\&&\qquad-\frac{s}{6}\mu \big(\partial (\partial(c^5)w) \Lambda^{s} \partial u,\Lambda^s u\big)-
\frac{s}{6}\mu \big(\partial (c^5) w,(\Lambda^s \partial u)^2\big).
\end{eqnarray*}
The control of the eighth term comes in the same way:
\begin{eqnarray*}
&&\frac{1}{6}\mu\big(c^5\partial^3\Lambda^s u,w\Lambda^s u\big)=-
\frac{1}{12}\mu\big(\partial^3(c^5w)\Lambda^s u,\Lambda^s u\big)\\&&\qquad-
\frac{1}{4}\mu\big(\partial^2(wc^5)\Lambda^{s}\partial u,\Lambda^s u\big)-
\frac{1}{4}\mu\big(\partial(wc^5)\Lambda^s u,\Lambda^{s} \partial^2 u\big)
\end{eqnarray*}
similarly:
\begin{eqnarray*}
&&-\frac{1}{4}\mu\big(\partial(wc^5)\Lambda^s u,\Lambda^{s}\partial^2 u\big)=\\&&\qquad
\frac{1}{4}\mu \big(\partial^2(c^5w) \Lambda^{s}\partial u,\Lambda^s u\big)+
\frac{1}{4}\mu \big(\partial(c^5w),(\Lambda^{s}\partial u)^2\big).
\end{eqnarray*}
We choice here $w$ so that
\begin{equation}\label{eqw}
-\frac{s}{6}\mu \big(\partial (c^5) w,(\Lambda^s \partial u)^2\big)+\frac{1}{4}\mu \big(\partial(c^5w),(\Lambda^{s}\partial u)^2\big)=0
\end{equation}
therefore if one take $w= c^{5({\frac{2s}{3}-1)}}$ we get eaisly (\ref{eqw}). Finally, one has
\begin{eqnarray*}
 &&\frac{1}{6}\mu \big([\Lambda^s,c^5]\partial_x^3 u,w\Lambda^s  u\big)+\frac{1}{6}\mu\big(c^5\partial^3\Lambda^s u,w\Lambda^s u\big)\\
&&=Q_2-\frac{s}{6}\mu \big(\partial (\partial(c^5)w) \Lambda^s\partial u,\Lambda^s u\big)
-\frac{1}{12}\mu\big(\partial^3(c^5w)\Lambda^s u,\Lambda^s u\big)\\&&\qquad-
\frac{1}{4}\mu\big(\partial^2(c^{5}w)\Lambda^{s}\partial u,\Lambda^s u\big)+
\frac{1}{4}\mu \big(\partial^2(c^{5}w)\Lambda^{s}\partial u,\Lambda^su\big),
\end{eqnarray*}
using again the fact that $\alpha\beta=O(\varepsilon)$ one can deduce
$$
e^{\varepsilon\lambda t}\partial_t (e^{-\varepsilon\lambda t}E^s(u)^2) 
	\leq  \varepsilon\big(C(E^s(v))-\lambda\big)E^s(u)^2+4\varepsilon E^s(f)E^s(u).
$$
This inequality, together with end of the proof of Theorem \ref{th1}, easily yields the result.
\end{proof}
\section{Rigorous justification of the variable bottom equations}\label{sectquatre}
\subsection{Rigorous justification of the variable bottom CH equation} \label{subsectquatreun}
We restrict here our attention to parameters 
 $\varepsilon$, $\beta$, $\alpha$ and $\mu$ linked by the relations  
\begin{eqnarray}
\label{cond3}
&\varepsilon=O(\sqrt{\mu}),\;\beta\alpha=O(\varepsilon),\;
 \beta\alpha=O(\mu^2).
\end{eqnarray}
These conditions are stronger than (\ref{cond1}), and this allows us to control the secular effects that prevented us from proving an $H^s$-consistency (and \emph{a fortiori} a full justification) for the variable bottom
equations of \S \ref{sectdeux}. 
The notion of  $H^s$-consistency is defined below:
\begin{definition}\label{defiH^s}
 Let $\wp_1$ be a family of parameters $\theta=(\varepsilon,\beta,\alpha,\mu)$ satisfying (\ref{cond3}). A family $(\zeta^{\theta},u^{\theta})_{\theta\in \wp_1}$ is 
$H^s$-consistent of order $s\ge0$ and on $[0,\frac{T}{\varepsilon}]$ with the GN equations (\ref{GN3}), if for all $\theta \in \wp_1$, (and denoting $h^\theta=1+\varepsilon\zeta^\theta-\beta b^{(\alpha)}$),\\
$$\left\lbrace
    \begin{array}{l}
\zeta^{\theta}_t +[h^\theta u]_x=\mu^{2}r_{1}^{\theta}\\
u^{\theta}_t+\zeta^{\theta}_x +\varepsilon u^{\theta}u^{\theta}_x=\displaystyle\frac{\mu}{3}\frac{1}{h^\theta}
[(h^\theta)^3(u^{\theta}_{xt}+\varepsilon u^{\theta}u^{\theta}_{xx}-\varepsilon (u^{\theta}_x)^2)]_x+\mu^{2}r_{2}^{\theta}
 \end{array}
\right.$$\\
with $(r_{1}^{\theta},r_{2}^{\theta})_{\theta\in \wp_1}$ bounded in 
$L^{\infty}([0,\frac{T}{\varepsilon}],H^s(\R)^2).$
\end{definition}
\begin{remark}
Since $H^s(\R)$ is continously embedded in $L^{\infty}(\R)$ for $s>1/2$, the $H^s$-consistency implies the $L^{\infty}$-consistency when $s>1/2$.
\end{remark}
\begin{proposition}\label{propH^sv}
 Let $b\in H^{\infty}(\R)$ and $p\in \R$. Assume that 
$$A=p, \quad B=p-\frac{1}{6} \quad E=-\frac{3}{2}p-\frac{1}{6},\quad F=-\frac{9}{2}p-\frac{23}{24}.$$
Then:
\begin{itemize}
\item For all family $\wp_1$ of parameters satisfying (\ref{cond3}),
 \item For all $s\ge0$ large enough  and $T>0$, 
\item For all bounded family $(u^{\theta})_{\theta\in \wp_1} \in C([0,\frac{T}{\varepsilon}],H^{s}(\R))$ solving (\ref{chgeneral}),
\end{itemize}
the familly $(\zeta^{\theta},u^{\theta})_{\theta\in \wp_1}$ with (omitting the index $\theta$)\\
\begin{equation}\label{zetaexpH^s}
 \zeta := cu+\frac{\varepsilon}{4}u^{2}+\frac{\mu}{6}c^4u_{xt}-
\varepsilon\mu c^4[\frac{1}{6}uu_{xx}+\frac{5}{48}u_x^{2}],
\end{equation}
is $H^s$-consistent on $[0,\frac{T}{\varepsilon}]$ with the GN equations (\ref{GN3}).
\end{proposition}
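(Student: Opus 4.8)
The plan is to re-examine the three-step argument of Proposition \ref{propvelocity}, keeping track of precisely which error terms in that proof were of the type $O_{L^\infty}(\mu^2)$ rather than $O(\mu^2)$ in the strong ($H^s$-bounded) sense, and to check that under the reinforced assumptions (\ref{cond3}) these terms are now genuinely $O(\mu^2)$ in $H^s$. The key observation is that the only obstruction to $H^s$-consistency in Proposition \ref{propvelocity} was the antiderivative term $\frac12\int_{-\infty}^x c_x u\,dx$ appearing in (\ref{zetaexp}): Lemma \ref{lemma2} only controls this term in $L^\infty$, with a loss of a factor $\sqrt\alpha$, and it is not in $H^s$ in general (it need not decay at $-\infty$), which is the source of the secular growth. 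In the new expansion (\ref{zetaexpH^s}) this term has simply been dropped. So the heart of the matter is to show that dropping it costs only $O(\mu^2)$ in $H^s$ under (\ref{cond3}): since $c_x u = -\frac{1}{2c}\beta\alpha\, b^{(\alpha)}_x u$, one has $|c_x u|_{H^s}\le \cst\,\beta\alpha\,|u|_{H^s}$, and now $\beta\alpha = O(\mu^2)$ by (\ref{cond3}) (rather than merely $O(\mu)$), so every occurrence of $c_x u$ — and of $\int c_x u_t$ via the equation — is directly $O(\mu^2)$ in the strong sense.

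Concretely I would proceed as follows. First, redo Step 1 (Lemma \ref{lemma1} and equation (\ref{eq2})): nothing changes here except that all the $O(\mu)$-type manipulations involving $c_x$ are now seen to be $O(\mu^2)$ or better, and in particular the $\frac32 c_x u$ term in (\ref{eq2}) is itself $O(\mu^2)$ in $H^s$ — so it can be absorbed into the remainder. Thus under (\ref{cond3}) the velocity equation (\ref{chgeneral}) implies
$$
u_t + c u_x + \tfrac32\varepsilon u u_x + \mu c^4 a u_{xxt} = \varepsilon\mu c^4[e u u_{xx} + d u_x^2]_x + O(\mu^2)
$$
with $O(\mu^2)$ now meaning $H^r$-bounded. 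Second, redo Step 2: we seek $v$ with $\zeta := cu + \varepsilon v$ satisfying the second GN equation up to $O(\mu^2)$. The computation producing (\ref{eqon v}) goes through verbatim, but the term $\frac12 c_x u$ on the right-hand side is now $O(\mu^2)$ in $H^s$, so the equation for $v$ becomes
$$
\varepsilon v_x = \tfrac\varepsilon2 u u_x + \mu c^4(a+\tfrac13)u_{xxt} - \varepsilon\mu c^4[(e+\tfrac13)u u_{xx} + (d+\tfrac12)u_x^2]_x + O(\mu^2),
$$
and this integrates \emph{without} the antiderivative term, giving exactly $\varepsilon v = \frac\varepsilon4 u^2 + \mu c^4(a+\frac13)u_{xt} - \varepsilon\mu c^4[(e+\frac13)u u_{xx} + (d+\frac12)u_x^2]$, i.e. $\zeta$ as in (\ref{zetaexpH^s}). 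Every term here is in $H^s$, so $\zeta^\theta$ is a genuine bounded family in $C([0,\frac{T}{\varepsilon}],H^s)$ and $r_2^\theta$ is bounded in $L^\infty H^s$.

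Third and last, redo Step 3, where the first GN equation $[cu+\varepsilon v]_t + [(1+\varepsilon(cu+\varepsilon v)-\beta b^{(\alpha)})u]_x = 0$ must hold up to $O(\mu^2)$. In Proposition \ref{propvelocity} this is where the relations $\beta\alpha^2 = O(\mu^2)$, $\beta\alpha\varepsilon = O(\mu^2)$ were used and where the antiderivative contributed the term $\frac12\int c_x u_t = -\frac12 c c_x u + O_{L^\infty}(\mu^2)$; now that term is simply absent, and the remaining $c_x$-contributions are all $\beta\alpha = O(\mu^2)$ in $H^s$. Re-collecting the algebra (which is identical to the flat-bottom computation up to these harmless corrections) yields the same coefficient conditions $a = -\frac16$, $e = -\frac16$, $d = -\frac{19}{48}$, hence the stated values of $A,B,E,F$, and now the residual in the first equation is $\mu^2 r_1^\theta$ with $r_1^\theta$ bounded in $L^\infty H^s$. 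The main point to watch — and the only place requiring genuine care rather than bookkeeping — is to confirm that no hidden use was made, anywhere in the original Step 3, of the weaker $L^\infty$-control: every term carrying the topography must be shown to come with a factor that is $O(\mu^2)$ under (\ref{cond3}), with no square-root loss; given that $|\partial_x^k c|_{H^s}\lesssim \alpha^k\beta$ and (\ref{cond3}) forces $\alpha\beta = O(\mu^2)$ (so \emph{a fortiori} $\alpha^k\beta = O(\mu^2)$ for $k\ge1$ since $\alpha\le\cst$), this is straightforward. This completes the proof.
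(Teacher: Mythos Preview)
Your proposal is correct and follows exactly the paper's own approach: the paper's proof simply says that under (\ref{cond3}) the term $c_x u$ is now $O(\mu^2)$ in $L^\infty([0,T/\varepsilon],H^s)$, so it can be dropped from the defining identity (\ref{eqon v}) for $v$ (hence no antiderivative in (\ref{zetaexpH^s})), and all the $O_{L^\infty}(\mu^2)$ terms of Proposition~\ref{propvelocity} become genuine $O(\mu^2)$ in $H^s$. You have spelled out in detail precisely this argument across Steps~1--3, including the bookkeeping the paper leaves implicit.
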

\begin{proof}
This is clear by using the same arguments of the proof of Proposition \ref{propvelocity} if we remark that now the term $c_xu$ (responsible of the secular growth effects) is of order $O(\mu^2)$ in $L^{\infty}([0,\frac{T}{\varepsilon}],H^s(\R))$. 
Therefore  this quantity is not required in the identity (\ref{eqon v}) that defines $v$. In the proof of Proposition \ref{propvelocity}  there is therefore no  $c_xu$ term  and 
 the $O_{L^{\infty}}(\mu^2)$ terms in Proposition \ref{propvelocity} are now of order $O(\mu^2)$.
\end{proof}
In Proposition \ref{propH^sv} , we constructed a family 
$(u^{\theta},\zeta^{\theta})$ \emph{consistent} with the
Green-Naghdi equations in the sense of Definition \ref{defiH^s}.
A consequence of the following theorem is a stronger result: this family
provides a good approximation of the exact solutions 
$(\underline{u}^{\theta},\underline{\zeta}^{\theta})$ of the
Green-Naghdi equations with same initial data in the sense that
$(\underline{u}^{\theta},\underline{\zeta}^{\theta})=
(u^{\theta},\zeta^{\theta})+O(\mu^2 t)$ for times $O(1/\varepsilon)$.
\begin{theorem}\label{th3}
	Let $b\in H^{\infty}(\R)$, $s\ge 0$ and $\wp_1$ 
	be a family of parameters satisfying (\ref{cond3}) with $\beta=O(\varepsilon)$.
	Let also 
	$A$, $B$, $E$ and $F$ be as in Proposition \ref{propH^sv}. 
	If $B<0$ then
	there exists $D>0$, $P>D$ and $T>0$ such that for all 
	$u^{\theta}_0\in H^{s+P}(\R)$:
	\begin{itemize}
	\item There is a unique family 
	$(u^{\theta},\zeta^{\theta})_{\theta\in \wp_1}\in 
	C([0,\frac{T}{\varepsilon}];H^{s+P}(\R)\times H^{s+P-2})$ given by 
	the resolution of (\ref{chgeneral}) 
	with initial condition $u^{\theta}_{0}$ and formula (\ref{zetaexpH^s});
	\item  There is a unique family
	$(\underline{u}^{\theta},\underline{\zeta}^{\theta})_{\theta
        \in\wp_1}\in 
	C([0,\frac{T}{\varepsilon}];H^{s+D}(\R)^2)$ solving the Green-Naghdi equations
	(\ref{GN3}) with initial condition 
	$(u^{\theta}_0,\zeta^{\theta}_{\vert_{t=0}})$.
	\end{itemize}
	Moreover, one has for all $\theta\in \wp_1$,
	$$
	\forall t\in [0,\frac{T}{\varepsilon}],\qquad
	\vert \underline{u}^{\theta}-u^{\theta}\vert_{L^\infty([0,t]\times \R)}+\vert \underline{\zeta}^{\theta}-\zeta^{\theta}\vert_{L^\infty([0,t]\times \R)}\leq \cst\; \mu^2 t.
	$$
\end{theorem}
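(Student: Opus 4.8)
The plan is to combine three ingredients: (i) the well-posedness of the CH-type equation (\ref{chgeneral}) provided by Theorem \ref{th1}, applied on the time scale $[0,T/\varepsilon]$; (ii) the well-posedness of the Green-Naghdi equations (\ref{GN3}) over uneven bottom on a time scale $O(1/\varepsilon)$ (this is classical, e.g.\ by the energy method or by quoting the existing literature on GN over varying bottoms); and (iii) the $H^s$-consistency result of Proposition \ref{propH^sv}, which says that the family $(\zeta^\theta,u^\theta)$ built from a solution of (\ref{chgeneral}) via formula (\ref{zetaexpH^s}) solves (\ref{GN3}) up to an $H^s$-bounded $O(\mu^2)$ remainder. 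First I would fix $D$ as the Sobolev index needed for the GN well-posedness and stability estimate, and then take $P$ larger than $D$ by the fixed number of derivatives lost in the CH well-posedness (Theorem \ref{th1} produces $u^\theta\in C([0,T/\varepsilon];X^{s+1})$ from $u^\theta_0\in H^{s+1}$, with a uniform existence time $T=T(\vert u^0\vert_{X^{s+1}})$; one further needs a couple of derivatives to make $\zeta^\theta$ in (\ref{zetaexpH^s}) land in $H^{s+P-2}$). The condition $B<0$ is exactly what is needed so that $m=-B>0$ and Theorem \ref{th1} applies; the extra assumption $\beta=O(\varepsilon)$ is used to absorb topography-generated terms into the $O(1/\varepsilon)$ error budget.

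The heart of the argument is a stability (``long-time error'') estimate for the Green-Naghdi equations: if $(\underline u,\underline\zeta)$ is the exact solution with the same initial data and $(u,\zeta)$ is the approximate (consistent) solution, then the difference solves the GN system linearized around one of the two families, forced by the consistency residual $\mu^2(r_1^\theta,r_2^\theta)$ which is bounded in $L^\infty([0,T/\varepsilon],H^s)$. I would set $w=(\underline\zeta-\zeta,\underline u-u)$, write the equation it satisfies, and run an energy estimate in the natural GN energy norm (the one incorporating the $\mathcal T$-operator, which under the scaling reduces to the $\mu\vert\partial_x\cdot\vert^2$ weight as in $X^{s+1}$). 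The symmetrizable quasilinear structure of GN gives an inequality of the form $\partial_t \mathcal E(w)\le \varepsilon C\,\mathcal E(w) + C\mu^2\vert w\vert$ (with $C$ depending on the $H^{s+D}$ norms of the solutions, which are controlled uniformly on $[0,T/\varepsilon]$ by the two well-posedness statements), whence by Gronwall $\mathcal E(w)(t)^{1/2}\le C\mu^2\int_0^t e^{\varepsilon C(t-t')}dt'\le C'\mu^2 t$ on $[0,T/\varepsilon]$ since $\varepsilon t\le T$. Sobolev embedding $H^s\hookrightarrow L^\infty$ then yields the stated $L^\infty$ bound $\vert\underline u-u\vert_{L^\infty([0,t]\times\R)}+\vert\underline\zeta-\zeta\vert_{L^\infty([0,t]\times\R)}\le \cst\,\mu^2 t$.

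The main obstacle I anticipate is two-fold. First, one must make sure the GN well-posedness is genuinely on the long time scale $O(1/\varepsilon)$ with estimates \emph{uniform} in $\theta\in\wp_1$ and in the bottom (this is where the hypotheses on $\alpha,\beta$ and $\beta=O(\varepsilon)$ are essential, to keep the topographic terms $\Im[h,\beta b^{(\alpha)}]u$, $\beta\partial_x b^{(\alpha)}$, etc., of size $O(\varepsilon)$ relative to the energy); care is needed because the naive commutator bounds on $c_x$ only give $O(\sqrt\alpha\beta)$, so one must exploit the refined commutator estimate of \cite{lannes'} exactly as in Lemma \ref{lemma3}. Second, the stability estimate has to be carried out so that the residual enters \emph{linearly} (not multiplied by any negative power of $\mu$), which forces one to measure the error in the same $\mu$-weighted norm used for the residual; getting the dispersive terms to close in that norm, and checking that the quasilinear change-of-variable from $\zeta$ to $h$ does not destroy positivity of the energy, are the delicate bookkeeping points. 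Once these uniform bounds are in hand, the rest (existence of both families on a common $[0,T/\varepsilon]$, and the final Gronwall step) is routine.
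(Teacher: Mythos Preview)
Your proposal is correct and follows essentially the same strategy as the paper: invoke Theorem~\ref{th1} (with $m=-B>0$) for the first point, Proposition~\ref{propH^sv} for the $H^s$-consistency, and then the well-posedness and stability of the Green-Naghdi equations on the $O(1/\varepsilon)$ time scale for the second point and the error estimate. The only difference is one of economy: where you sketch the GN stability/Gronwall argument in detail (and correctly identify the need for uniform-in-$\theta$ long-time bounds and a $\mu$-weighted energy), the paper simply invokes Theorem~4.10 of \cite{AL2}, which packages both the long-time well-posedness of (\ref{GN3}) under the present scaling (with $\beta=O(\varepsilon)$) and the stability estimate yielding the $O(\mu^2 t)$ error directly from $H^s$-consistency.
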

\begin{remark}
        It is known (see \cite{AL}) that the Green-Naghdi equations give,
	under the scaling (\ref{scalingch}) with $\beta=O(\varepsilon)$ 
        a correct approximation of 
	the exact solutions of the full water-waves equations (with a precision
	$O(\mu^2 t)$ and over a time scale $O(1/\varepsilon)$). It follows 
	that the unidirectional approximation discussed above
	approximates the solution of the water-waves equations
	with the same accuracy.
\end{remark}
\begin{remark}
        We used the unidirectional equations derived on the velocity
	as the basis for the approximation justified in the Theorem \ref{th3}.
	One could of course use instead the unidirectional approximation
	(\ref{chgeneralzeta}) derived on the surface elevation.
\end{remark}
\begin{proof}
        The first point of the theorem is a direct consequence of Theorem \ref{th1}.
        Thanks to Proposition \ref{propH^sv}, we know that 
        $(u^{\theta},\zeta^{\theta})_{\theta\in\wp_1}$ is $H^s$-consistent with the Green-Naghdi 
        equations (\ref{GN3}), so that the second point of the theorem and the error estimate follow
        at once from the well-posedness and stability of the Green-Naghdi
        equations under the present scaling (see Theorem 4.10 of \cite{AL2}).
\end{proof}
\subsection{Rigorous justification of the variable bottom KDV-top equation} \label{subsectquatredeux}
In this subsection the parameters 
 $\varepsilon$, $\beta$, $\alpha$ and $\mu$ are assumed to satisfy
\begin{eqnarray}
\label{cond4}
&\varepsilon=O(\mu),\;\beta\alpha=O(\varepsilon^2).
\end{eqnarray}
We give first a Proposition regarding the $H^s$-consistency result for the KDV-top equation.
\begin{equation}
 \zeta_t + c\zeta_x +\frac{3}{2c} \varepsilon \zeta\zeta_x + 
\frac{1}{6}\mu c^5 \zeta_{xxx} +\frac{1}{2}c_x\zeta=0,
\label{kdv-top'}
\end{equation}
\begin{proposition}\label{propboussH^s}
 Let $b\in H^{\infty}(\R)$. Then:
\begin{itemize}
\item For all family $\wp_1'$ of parameters satisfying (\ref{cond4}),
 \item For all $s\ge 0$ large enough and $T>0$, 
\item For all bounded family $(\zeta^{\theta})_{\theta\in \wp_1'} \in C([0,\frac{T}{\varepsilon}],H^{s}(\R))$ solving (\ref{kdv-top'})
\end{itemize}
the familly $(\zeta^{\theta},u^{\theta})_{\theta\in \wp'}$ with (omitting the index $\theta$)\\
\begin{equation}\label{velocityexpkdvH^s}
u:= \frac{1}{c}\Big(\zeta-\frac{\varepsilon}{4c^2}\zeta^2
	+\mu\frac{1}{6}c^4\zeta_{xx}\Big)
\end{equation}
is $H^s$-consistent on $[0,\frac{T}{\varepsilon}]$ with the equations (\ref{GN3}).
\end{proposition}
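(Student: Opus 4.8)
The plan is to obtain the result by the same reduction used for Proposition~\ref{propbouss}, now carried out in $H^s$ instead of $L^\infty$. The point is that the hypotheses~(\ref{cond4}) are stronger than~(\ref{cond2}) --- in particular $\beta\alpha=O(\varepsilon^2)=O(\mu^2)$ and, since $\varepsilon=O(\mu)$, also $\varepsilon^2=O(\mu^2)$ and $\mu\varepsilon=O(\mu^2)$ --- so that, \emph{in the sense of $H^s$-consistency}, three things hold simultaneously: (i) taking $q=\frac16$ (hence $B=0$), the family~(\ref{chgeneralzeta}) coincides with the KdV-top equation~(\ref{kdv-top'}) up to $O(\mu^2)$ terms, because all the discarded contributions (the $\varepsilon^2\zeta^2\zeta_x$ and $\varepsilon^3\zeta^3\zeta_x$ terms, the $\varepsilon\mu$ terms on the right-hand side) are $O(\mu^2)$ in $L^\infty([0,\frac{T}{\varepsilon}],H^s)$; (ii) for the same reason the Green-Naghdi system~(\ref{GN3}) and the Boussinesq system~(\ref{BOUSS}) are $H^s$-consistent-equivalent; (iii) Proposition~\ref{propzeta} admits an $H^s$ version --- obtained from it exactly as Proposition~\ref{propH^sv} was obtained from Proposition~\ref{propvelocity} --- in which the non-local term is absent. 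Combining (i)--(iii) (with $q=\frac16$) then yields the claim with the auxiliary velocity given by~(\ref{velocityexpkdvH^s}).

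The heart of the matter is point~(iii), and it rests on a single observation: since $c=\sqrt{1-\beta b^{(\alpha)}}$ with $b\in H^\infty(\R)$, the bound $\beta\alpha=O(\mu^2)$ gives $\vert\partial_x^k c\vert_\infty=O(\beta\alpha)=O(\mu^2)$ for every $k$, whence, by the product estimate $\vert c_x\zeta\vert_{H^s}\le\cst\,\vert c_x\vert_{W^{[s]+1,\infty}}\vert\zeta\vert_{H^s}$ (valid for $s$ large enough, $[s]$ being the largest integer $\le s$), one has $\vert c_x\zeta\vert_{L^\infty([0,\frac{T}{\varepsilon}],H^s)}=O(\mu^2)$ for the given bounded family $(\zeta^\theta)$. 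Rerunning then the three-step argument of Proposition~\ref{propzeta}, when one solves for $v$ so that $u=\frac1c(\zeta+\varepsilon v)$ satisfies the first equation of~(\ref{GN3}) modulo $O(\mu^2)$, the term $-\frac12\int_{-\infty}^x\frac{c_x}{c}\zeta$ present in~(\ref{exp u}) (and in Proposition~\ref{propbouss}) is itself $O(\mu^2)$ in $H^s$ and can simply be dropped, leaving the formula~(\ref{velocityexpkdvH^s}); one then verifies the second equation of~(\ref{GN3}) up to $O(\mu^2)$ exactly as before, noting that every remainder produced along the way is either a product/derivative expression in $\zeta$, which stays in $H^s$ by tame Moser-type estimates once $(\zeta^\theta)$ is bounded in a high enough Sobolev space, or a topographic/higher-order contribution carrying a factor $\beta\alpha$, $\varepsilon^2$ or $\varepsilon\mu$ --- hence $O(\mu^2)$ in $L^\infty([0,\frac{T}{\varepsilon}],H^s)$ rather than merely in $L^\infty([0,\frac{T}{\varepsilon}]\times\R)$.

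I do not expect a genuine obstacle here: the computational content is the bookkeeping already performed for Propositions~\ref{propzeta}--\ref{propbouss}, the only new input being that the $c_x\zeta$-type terms --- which under the weaker scalings~(\ref{cond1})--(\ref{cond2}) are only $L^\infty$-controllable and are precisely what produces the secular growth --- become $O(\mu^2)$ in $H^s$ as soon as $\beta\alpha=O(\varepsilon^2)$. The sole point requiring care is to take $s$ large enough (this is the implicit meaning of ``$s\ge0$ large enough'' in the statement) that all product and derivative remainders lie in $H^s$ by tame estimates.
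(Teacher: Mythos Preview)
Your proposal is correct and follows essentially the same route as the paper's (very brief) proof: adapt Proposition~\ref{propbouss} to the $H^s$ setting exactly as Proposition~\ref{propH^sv} was obtained from Proposition~\ref{propvelocity}, the key point being that under~(\ref{cond4}) one has $c_x\zeta=O(\beta\alpha)=O(\mu^2)$ in $H^s$, so the non-local term may be dropped and all $O_{L^\infty}(\mu^2)$ remainders upgrade to $O(\mu^2)$ in $H^s$; the paper also explicitly invokes the fact that $H^s$-consistency with~(\ref{BOUSS}) implies $H^s$-consistency with~(\ref{GN3}) under the present scaling, which is your point~(ii).

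One small imprecision worth flagging: you assert that the \emph{antiderivative} $-\tfrac12\int_{-\infty}^x\frac{c_x}{c}\zeta$ is itself $O(\mu^2)$ in $H^s$, but an antiderivative of an $H^s$ function need not lie in $L^2$. The correct statement --- which is clearly what you intend and is exactly what the paper does in passing from Proposition~\ref{propvelocity} to Proposition~\ref{propH^sv} --- is that the \emph{source term} $\frac{c_x}{c}\zeta$ appearing in the equation for $\varepsilon v_x$ is $O(\mu^2)$ in $H^s$ and can therefore be absorbed into the remainder \emph{before} integrating, so that the non-local term simply never enters the formula for $v$.
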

\begin{remark}\label{remarkVH^s}
Similarly, one can prove that the solution of the  following KDV-top equations
\begin{equation}\label{kdv-topVH^s}
 u_t + cu_x +\frac{3}{2}\varepsilon uu_x + 
\frac{1}{6}\mu c^5 u_{xxx} +\frac{3}{2}c_xu=0,
\end{equation}
on the velocity, with 
\begin{equation}\label{zetaexpkdvH^s}
 \zeta := cu+\frac{\varepsilon}{4}u^{2}-\frac{\mu}{6}c^5u_{xx}
\end{equation}
 is $H^s$-consistent with the equations Green-Naghdi equations (\ref{GN3}).
\end{remark}
\begin{proof}
One can adapt the proof of Proposition \ref{propbouss} in the same way as we adapted the proof of Proposition \ref{propvelocity} to establish Proposition
\ref{propH^sv} (we also use the fact that if a family is consistent with the Boussinesq equations (\ref{BOUSS}), it is also consistent with the Green-Naghdi equations (\ref{GN3}) under the present scaling).
\end{proof}
The following Theorem deals the rigorous justification of the KDV variable bottom equation:
\begin{theorem}\label{th4}
	Let $b\in H^{\infty}(\R)$, $s\ge 0$ and $\wp'_1$ 
	be a family of parameters satisfying (\ref{cond4}) with $\beta=O(\varepsilon)$. 
	If $B<0$ then
	there exists $D>0$, $P>D$ and $T>0$ such that for all 
	$\zeta^{\theta}_0\in H^{s+P}(\R)$:
	\begin{itemize}
	\item There is a unique family 
	$(\zeta^{\theta},u^{\theta})_{\theta\in \wp'_1}\in 
	C([0,\frac{T}{\varepsilon}];H^{s+P-2}(\R)\times H^{s+P})$ given by 
	the resolution of (\ref{kdv-top'}) 
	with initial condition $\zeta^{\theta}_{0}$ and formula (\ref{velocityexpkdvH^s});
	\item  There is a unique family
	$(\underline{\zeta}^{\theta},\underline{u}^{\theta})_{\theta
        \in\wp'_1}\in 
	C([0,\frac{T}{\varepsilon}];H^{s+D}(\R)^2)$ solving the  Green-Naghdi equations
	(\ref{GN3})  with initial condition 
	$(\zeta^{\theta}_0,u^{\theta}_{\vert_{t=0}})$.
	\end{itemize}
	Moreover, one has for all $\theta\in \wp'_1$,
	$$
	\forall t\in [0,\frac{T}{\varepsilon}],\qquad
	\vert \underline{u}^{\theta}-u^{\theta}\vert_{L^\infty([0,t]\times \R)}+\vert \underline{\zeta}^{\theta}-\zeta^{\theta}\vert_{L^\infty([0,t]\times \R)}\leq \cst\; \varepsilon^2 t.
	$$
\end{theorem}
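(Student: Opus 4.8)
The plan is to mimic exactly the structure of the proof of Theorem \ref{th3}, replacing the role of Theorem \ref{th1} (well-posedness of the CH-type equation) by that of Theorem \ref{th2} (well-posedness of the KDV-top equation), and the role of Proposition \ref{propH^sv} ($H^s$-consistency in the CH scaling) by that of Proposition \ref{propboussH^s} ($H^s$-consistency in the KDV scaling). First I would observe that the parameter set (\ref{cond4}) with $\beta=O(\varepsilon)$ is a subset of (\ref{cond2}), so the hypotheses of Theorem \ref{th2} are met; moreover, since $b\in H^\infty(\R)$ and $\beta=O(\varepsilon)$ is small, the coercivity assumption $c(x)=\sqrt{1-\beta b^{(\alpha)}(x)}\ge c_0$ holds uniformly on $\wp'_1$ for $\varepsilon$ small enough, so Theorem \ref{th2} indeed furnishes, for each $\zeta^\theta_0\in H^{s+P}(\R)$, a unique solution $\zeta^\theta$ of (\ref{kdv-top'}) in $C([0,\frac{T}{\varepsilon}];H^{s+P}(\R))\cap C^1([0,\frac{T}{\varepsilon}];H^{s+P-1}(\R))$ on a time scale $T/\varepsilon$ with $T$ depending only on $|\zeta^\theta_0|_{H^{s+P}}$. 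Defining $u^\theta$ by formula (\ref{velocityexpkdvH^s}) — which costs two derivatives — yields the first bullet with the regularity $C([0,\frac{T}{\varepsilon}];H^{s+P-2}(\R)\times H^{s+P})$ as stated (note the KDV-top equation is solved on $\zeta$, so $\zeta^\theta$ is the more regular component here, contrary to Theorem \ref{th3}).

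Next, the key point is that Proposition \ref{propboussH^s} tells us the pair $(\zeta^\theta,u^\theta)$ so constructed is $H^s$-consistent with the Green-Naghdi equations (\ref{GN3}) in the sense of Definition \ref{defiH^s}: the residuals $r_1^\theta,r_2^\theta$ are bounded in $L^\infty([0,\frac{T}{\varepsilon}],H^s(\R)^2)$ by a constant times $\varepsilon^2$ (under the present scaling (\ref{cond4}), $\mu^2$ and $\varepsilon^2$ are comparable, and the secular term $c_x\zeta$, normally responsible for the growth, is now $O(\beta\alpha)=O(\varepsilon^2)$ in $H^s$, hence harmless). We must also invoke the equivalence, under (\ref{cond4}), between $H^s$-consistency with the Boussinesq system (\ref{BOUSS}) and with the full Green-Naghdi system (\ref{GN3}), exactly as in the proof of Proposition \ref{propbouss}. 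Then I would set up the exact solution of (\ref{GN3}) with the \emph{same} initial data $(\zeta^\theta_0,u^\theta_{\vert_{t=0}})$: by the well-posedness theory for the Green-Naghdi equations under the scaling at hand (Theorem 4.10 of \cite{AL2}, or equivalently the stability statement of \cite{AL}), provided the initial data lie in a sufficiently high Sobolev space $H^{s+D}$, there exists a unique solution $(\underline{\zeta}^\theta,\underline{u}^\theta)$ in $C([0,\frac{T}{\varepsilon}];H^{s+D}(\R)^2)$ on the same time scale, up to shrinking $T$; this gives the second bullet. One then chooses $P>D$ large enough (the extra derivatives absorbing the two derivatives lost in (\ref{velocityexpkdvH^s}) and those needed to feed the consistency estimate at the required order) and uses the stability inequality of the Green-Naghdi flow — which controls the difference of two solutions in terms of the consistency residual of the approximate one — to conclude
$$
\forall t\in[0,\tfrac{T}{\varepsilon}],\qquad
|\underline{u}^\theta-u^\theta|_{L^\infty([0,t]\times\R)}
+|\underline{\zeta}^\theta-\zeta^\theta|_{L^\infty([0,t]\times\R)}\le \cst\;\varepsilon^2 t,
$$
the factor $t$ coming from the Gronwall/Duhamel integration of the $O(\varepsilon^2)$-size residual over $[0,t]$, and the $L^\infty$ bound following from $H^s\hookrightarrow L^\infty$ for $s>1/2$.

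I do not expect any genuinely new obstacle: the entire argument is a transcription of Theorem \ref{th3}'s proof into the KDV scaling, and every ingredient has already been established earlier in the paper. The one point demanding a little care — and the one I would check first — is the bookkeeping of regularity indices: because (\ref{kdv-top'}) is posed on $\zeta$ and $u$ is then reconstructed by a formula losing two derivatives (whereas in Theorem \ref{th3} the roles of $\zeta$ and $u$ are swapped), one must make sure the spaces in the two bullets, namely $H^{s+P-2}(\R)\times H^{s+P}$ for the approximate pair and $H^{s+D}(\R)^2$ for the exact Green-Naghdi pair with $P>D$, are mutually consistent and that the initial datum $u^\theta_{\vert_{t=0}}$ fed to the Green-Naghdi solver lies in the required high-regularity space; this is purely a matter of choosing $D$ and $P$ appropriately and invoking the tame dependence of all constants on $|b|_{H^\sigma}$ for $\sigma$ large. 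A secondary, equally routine, check is that the smallness of $\beta$ (hence of $\beta\alpha$) indeed suppresses the secular growth: this is precisely the content of the refinement of Proposition \ref{propbouss} into Proposition \ref{propboussH^s}, already granted.
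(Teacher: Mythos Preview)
Your proposal is correct and follows exactly the same approach as the paper's own proof, which simply invokes Theorem~\ref{th2} for the first point, Proposition~\ref{propboussH^s} for the $H^s$-consistency, and then refers to the argument of Theorem~\ref{th3} (i.e., the well-posedness and stability result of \cite{AL2}) for the second point and the error estimate. Your write-up is in fact more detailed than the paper's three-line proof, spelling out the regularity bookkeeping and the verification that (\ref{cond4}) with $\beta=O(\varepsilon)$ is compatible with the hypotheses of Theorem~\ref{th2}.
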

\begin{remark}
        We used the unidirectional equation derived on the free surface elevation
	as the basis for the approximation justified in the Theorem \ref{th4}.
	One could of course use instead the unidirectional approximation
	(\ref{kdv-topVH^s}) derived on velocity.
\end{remark}
\begin{proof}
        The first point of the theorem is a direct consequence of Theorem \ref{th2}.
        Thanks to Proposition \ref{propboussH^s}, we know that 
        $(u^{\theta},\zeta^{\theta})_{\theta\in\wp'_1}$ is $H^s$-consistent with the Green-Naghdi
        equations (\ref{GN3}), so that the second point follows as in Theorem \ref{th3}.
\end{proof}
\subsection*{Acknowledgments}\ The author is grateful to David Lannes for encouragement and many helpful discussions.

\providecommand{\href}[2]{#2}
\end{document}